\newtheorem{theorem}{Theorem}[section]
\newtheorem{lemma}[theorem]{Lemma}
\theoremstyle{definition}
\newtheorem{definition}[theorem]{Definition}
\newtheorem{remark}[theorem]{Remark}
\newcommand{\mN}{\mathbb{N}}
\newcommand{\Pb}{\mathsf{P}}
\newcommand{\E}{\mathsf{E}}
\newcommand{\1}{\mathbf{1}} 
\newcommand{\ee}{\mathrm{e}}
\let\oldmarginpar\marginpar
\renewcommand{\marginpar}[1]{\oldmarginpar{\scriptsize\texttt{\color{blue}{#1}}}}
\title{Explosion and implosion of birth-and-death continuous-time random walks}
\let\@fnsymbol\@arabic
\author{
Andrey Pilipenko\thanks{\texttt{pilipenko.ay@gmail.com}, Universit\'e de Gen\`eve, 
Section de math\'ematiques, Switzerland; Institute of Mathematics of NAS of Ukraine.}
\and 
Vadym Tkachenko\thanks{\texttt{vtkachenko@kse.org.ua}, Institute of Mathematics of NAS of Ukraine; Kyiv School of Economics.
}
}
\date{}
\titleformat{\subsubsection}[runin] 
  {\normalfont\normalsize\bfseries} 
  {\thesubsubsection.} 
  {0.5em} 
  {} 
  [.] 
\begin{document}
\renewcommand{\qedsymbol}{$\blacksquare$}

\maketitle
\vspace{-3em}
\begin{abstract}
    We provide necessary and sufficient conditions for explosion and implosion of birth-and-death (non-Markov) continuous-time random walks. In other words, we obtain conditions for $\infty$ to be accessible and for it to be an entrance point. We derive the analytical regularity criteria in terms of the appropriate scale function and the speed measure, which involve transition probabilities and the Laplace transform of the waiting times. We show that these criteria closely resemble classical ones for diffusions and Markov birth-and-death processes. We then calculate explicit conditions of regularity for semi-Markov processes with waiting times that have (a) finite first moments; (b)~regularly varying tails (in particular, $\alpha$-stable distribution).
    
    \phantom{}
    
    \noindent\textbf{Keywords:} continuous-time random walk, semi-Markov process, explosion, implosion, exit boundary, entrance boundary

    \noindent\textbf{MSC 2020:} 60J74, 60K15, 60K50
\end{abstract}

\section{Introduction}\label{introduction}

Semi-Markov processes, commonly known as continuous-time random walks, were introduced by L\'evy \cite{levy1954Processus} and Smith \cite{Smith1955Regenerative} as a generalization of Markov processes on a countable state space. Markov property of the latter dictates the distribution of waiting times to be exponential, which is not the case for many observed processes (population processes, queueing networks, microbiological processes).

Questions about regularity arise naturally when modeling physical or natural phenomena. A core issue is whether a process, while evolving continuously in time, can travel through infinitely many states in a finite duration. This phenomenon, known as explosion, must be ruled out for processes that model clearly non-explosive systems. Another classical question is that of the entrance boundary, or implosion: can a process be initiated from its boundary that is a not part of the state space. We provide all needed definitions and some preliminaries in the Section \ref{definitions}. We proceed with a short historical overview of these problems.

Boundary classification of one dimensional diffusion processes begins with Feller's 1952 paper \cite{Feller52ParabDiffEq}. In Section 11 of this classical work, he introduces four types of boundaries depending on the behavior of the diffusion: regular, exit, entry, and natural. His approach lies in the semigroup method. To express the boundary conditions, Feller uses what is later to be called the scale function. In the follow-up 1957 paper \cite{Feller1957Generalized} he introduces the concept of the speed measure and establishes a canonical infinitesimal generator in terms of it and the scale function. These notions play a central role in the current paper. Boundary classification for diffusions was advanced and completed using the sample-path approach in the book by It{\^o} and McKean \cite{ItoMcKean2012(1965)Diffusion}. For more details, we refer to a comprehensive overview by Peskir~\cite{Peskir2015Boundary}.

Moving to Markov processes on a countable state space, Feller discusses birth-and-death processes in a 1959 paper \cite{Feller1959BirthDeath}, using probabilistic approach to boundary problems. Dynkin and Yushkevich \cite{D_Y} provide analytical conditions for explosion of such processes in terms of transition probabilities and the expectation of waiting times. They construct the appropriate scale function and speed measure such that the regularity condition becomes as in the case of diffusion. Their approach is of a great importance to the current paper. 

Boundary behavior for general Markov processes is challenging. For time-homogeneous Markov chains, the problem is resolved in \cite{chung2012markov, Feller1957Boundaries}. Results also exist for continuous-state branching processes with competition \cite{Foucart2019Branch_proc_with_comp_dual_and_refl_at_infty}.

For semi-Markov processes on a countable state space, Feller \cite{Feller1964SemiMarkov} derives an analogue of Kolmogorov's backward equations and uses the Laplace transform to obtain a minimal solution. He shows that the boundary can be reached in finite time if and only if a certain problem admits a minimal substochastic solution. However, this condition is difficult to verify in practice. Gerrard \cite{gerrard1983regularity} uses a similar approach to obtain explosion conditions, but they remain implicit.

We take a different approach and focus specifically on birth-and-death processes. This allows us to derive natural and explicit analytical conditions in terms of the appropriate scale function and speed measure.

\section{Definitions and formulations}\label{definitions}
Consider a birth-and-death Markov chain $(X_k)_{k \ge 0}$ on the state space $\{0,1,2,\dots\}$ with transition probabilities $p_{i,i+1} = p_i$, $p_{i, i-1} = q_i$ such that $p_{0,1} = 1$ and $p_i$, $q_i$ are positive with $p_i + q_i = 1$ for $i \ge 1$. It can be represented with the following diagram:
    \begin{center}
        \begin{tikzpicture}[-Stealth, >=stealth, shorten >=2pt, line width=0.5 pt, node distance=2 cm]

\node[circle,  draw]  (zero)  {0};
\node[circle, draw] (one)  [right of=zero] 
  {1};
  \node[circle, draw] (two) [right of=one]{2};
  \node[draw=none] (three) [right of=two] {\dots};
\path (zero) edge [bend left] node [above ] {$1$} (one) ;
\path (one) edge [bend left] node [below] {$q_1$} (zero);
\path (one) edge [bend left] node [above ] {$p_1$} (two);
\path (two) edge [bend left] node [below] {$q_2$} (one);
\path (two) edge [bend left] node [above ] {$p_2$} (three) ;
\path (three) edge [bend left] node [below] {$q_3$} (two);
\end{tikzpicture}
    \end{center}
Let $\{\tau_i^+, \tau_i^- \mid i \ge 0\}$ be positive random variables representing right and left waiting times, respectively. For each $\tau_i^+$ and $\tau_i^-$, consider sequences $\{\tau_{i,k}^+\}_{k\ge0}$ and $\{\tau_{i,k}^-\}_{k\ge0}$ of their independent copies. Suppose all these sequences and the Markov chain $(X_k)$ are independent of each other. Put $T_{0} = 0$ and define random moments of jumps recurrently: 
\begin{equation*}
    T_{k+1} = T_k + \tau^k, \quad k \ge 0,
\end{equation*}
where
 \begin{align*}
    \tau^k = \begin{cases}
        \tau_{i, k}^+ \quad \text{if }  X_k=i \text{ and } X_{k+1}=i+1, \\
        \tau_{i, k}^- \quad \text{if } X_k=i\text{ and } X_{k+1}=i-1.
    \end{cases}         
    \end{align*}
    If $X_k = i$, then the distribution of $\tau^k$ will be denoted by $\tau_i$.
\begin{definition}\label{def of X}
The process $X(t)$, $t \ge 0$, defined by the formula
    \begin{equation*}
        X(t) = X_k, \quad t \in [T_{k}, T_{k+1}), k \ge 0,
    \end{equation*}
 is called a \emph{continuous-time random walk} 
with the embedded Markov chain $(X_k)$ and waiting times $\{\tau_i^+, \tau_i^- \}$.
\end{definition}

\begin{definition}\label{def of moment of exp}
Denote $\sigma_n = \inf\{t \ge 0 \mid X(t) = n \}$, i.e., the first moment when $X$ hits the state $n \in \{0, 1, 2, \dots\}$. Then $\sigma_\infty =\lim_{n \to \infty} \sigma_n$ is called the\emph{moment of explosion} of the process $X$. If $\sigma_\infty < \infty$, it is said that $X$ \emph{explodes} to $\infty$.
\end{definition}

\begin{remark}
By the moment of time $\sigma_\infty$, the process has traveled through infinitely many states. Processes having $\sigma_\infty= \infty$ a.s.~are called \emph{regular}. Otherwise, \emph{non-regular}.
\end{remark}
Our first goal is to find analytical conditions of regularity for $X$. 
Our second goal is to examine a process
that ``starts at infinity'' and establish when it can ``break away from infinity'' in a finite time. By $\Pb_i$ denote the conditional probability under $X(0) = i$.
\begin{definition}
    The process \emph{implodes} from infinity if 
\begin{equation*}
\lim_{C\to+\infty}\sup_i\Pb_i(\sigma_0>C)=0.
    \end{equation*}
\end{definition}
We provide a more suitable version of this definition in Section \ref{alt construction}, using an alternative construction of $X$. An equivalent definition for the Markov case can be found in \cite[p.~313]{MenshikovPopovWade2016RandomWalks}. 

\section{Main results}
In this section, we formulate two main theorems of this paper and compare them to known results for diffusion processes. Let $X$ be a continuous-time random walk
from Definition~\ref{def of X}. Introduce the \emph{scale}
\begin{equation}\label{eqn: delta}
    \delta_k =  \frac{q_1 \dots q_k}{p_1 \dots p_k}, \quad k \ge 0, 
\end{equation}
and the \emph{speed measure}
\begin{equation}\label{eqn: nu}
    \nu_i = (1-\E \ee^{-\tau_i}) \frac{p_{1} \dots p_{i-1}}{q_1 \dots q_i}, \quad i \ge 0,
\end{equation}
where $\E \ee^{-\tau_i} = p_i\E \ee^{-\tau_i^+}+q_i\E \ee^{-\tau_i^-}$, and the product over an empty set is equal to $1$, hence
\begin{equation*}
    \delta_0 = 1, \quad \nu_0 =  1-\E \ee^{-\tau_0^+}, \quad \nu_1 = (1-\E \ee^{-\tau_1})\frac{1}{q_1}.
\end{equation*}
\begin{theorem}[Explosion condition]\label{explosion theorem} 
For every initial distribution of $X$, the following alternative holds:
\begin{align}
     \mathsf P \{X \text{ explodes to }\infty \} = 1 \quad \Longleftrightarrow \quad \sum_{k = 0}^\infty  \left(\sum_{i=0}^{k} \nu_i \right) \delta_k < \infty, \label{eqn: explosion condition}\\
     \mathsf P \{X \text{ explodes to }\infty \} = 0 \quad  \Longleftrightarrow \quad \sum_{k = 0}^\infty  \left(\sum_{i=0}^{k} \nu_i \right) \delta_k = \infty. \label{eqn: no explosion condition}
\end{align}
\end{theorem}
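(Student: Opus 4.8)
The plan is to detect explosion through the total accumulated waiting time and to reduce the whole question to a property of the embedded chain $(X_k)$. First I would note that, since the walk is nearest-neighbour and irreducible, it hits every level, so $\sigma_n=T_{k_n}$ with $k_n=\inf\{k:X_k=n\}\uparrow\infty$; consequently $\sigma_\infty=\lim_n\sigma_n=\lim_k T_k=\sum_{k\ge0}\tau^k$. Conditioning on the entire trajectory $(X_k)$, the waiting times $\tau^k$ become independent with $\E[\ee^{-\tau^k}\mid(X_j)]=g_k$, where $g_k=\E\ee^{-\tau_{X_k}^+}$ on $\{X_{k+1}=X_k+1\}$ and $g_k=\E\ee^{-\tau_{X_k}^-}$ on $\{X_{k+1}=X_k-1\}$. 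Applying the Kolmogorov three-series criterion for independent nonnegative summands ($\sum\xi_k<\infty$ a.s.\ iff $\sum(1-\E\ee^{-\xi_k})<\infty$) conditionally on the path, I obtain $\Pb(\sigma_\infty<\infty\mid(X_k))=\1_E$, where $E:=\{\sum_k(1-g_k)<\infty\}$. Thus, up to a null set, explosion coincides with the trajectory event $E$, which lies in the tail $\sigma$-field of $(X_k)$.

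Next I would establish the $0$--$1$ alternative. As $E$ is shift-invariant, $h_i:=\Pb_i(E)$ is harmonic for the embedded chain, $h_i=p_ih_{i+1}+q_ih_{i-1}$ for $i\ge1$, with the reflecting relation $h_0=h_1$. Telescoping gives $h_{i+1}-h_i=\delta_i(h_1-h_0)=0$, so $h\equiv\rho$ is constant in the starting state. L\'evy's $0$--$1$ law together with tail-measurability of $E$ then forces $\1_E=\lim_n\Pb(E\mid X_0,\dots,X_n)=\lim_n h_{X_n}=\rho$ almost surely, whence $\rho\in\{0,1\}$. This simultaneously yields the dichotomy of the theorem and the independence of the initial distribution, and it remains only to identify $\rho$ in terms of $\Sigma:=\sum_{k\ge0}\bigl(\sum_{i\le k}\nu_i\bigr)\delta_k$.

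The link with $\Sigma$ comes from a first-moment computation. Writing $S:=\sum_k(1-g_k)$ and grouping the summands by the state visited, the expected contribution of each visit to $i$ is $1-\E\ee^{-\tau_i}$, so $\E_0 S=\sum_i(1-\E\ee^{-\tau_i})\,G(0,i)$, with $G(0,i)$ the expected number of visits to $i$. In the transient regime $\sum_k\delta_k<\infty$, the reversible measure $m_i=\frac{p_1\cdots p_{i-1}}{q_1\cdots q_i}$ from \eqref{eqn: nu} and a standard last-exit/escape-probability computation give $G(0,i)=m_i\sum_{k\ge i}\delta_k$; since $\nu_i=(1-\E\ee^{-\tau_i})m_i$, Fubini yields $\E_0 S=\sum_i\nu_i\sum_{k\ge i}\delta_k=\Sigma$. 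The convergent half is then immediate: if $\Sigma<\infty$ then $\E_0 S<\infty$, so $S<\infty$ and hence $\sigma_\infty<\infty$ almost surely, giving \eqref{eqn: explosion condition} with $\rho=1$. In the recurrent regime $\sum_k\delta_k=\infty$ one has $\Sigma=\infty$ and $S\ge(1-\E\ee^{-\tau_0^+})V_0=\infty$ (infinitely many returns to $0$), so explosion fails, consistent with \eqref{eqn: no explosion condition}.

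The remaining and hardest case is transient with divergence: $\sum_k\delta_k<\infty$ yet $\Sigma=\infty$. Here $\E_0 S=\infty$ must be upgraded to $S=\infty$ almost surely, which does not follow formally, since an almost surely finite variable can have infinite mean. By the $0$--$1$ law it suffices to prove $\Pb_0(S=\infty)>0$. Passing to the visit counts $V_i$, the expected per-visit contribution at $i$ is $1-\E\ee^{-\tau_i}$, so in mean $S$ is comparable to $\tilde S:=\sum_i(1-\E\ee^{-\tau_i})V_i$, reducing the claim to $\tilde S=\infty$ almost surely. I would attack this with a second-moment (Paley--Zygmund) estimate on the partial sums $\sum_{i\le N}(1-\E\ee^{-\tau_i})V_i$, whose expectations tend to $\infty$; the decisive input is a bound on $\operatorname{Cov}(V_i,V_j)$ coming from the Green's function of the transient chain, ensuring the second moment is comparable to the square of the first. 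Controlling these visit-count correlations --- equivalently, exploiting the branching/Markov structure of the edge-crossing numbers of a transient birth-and-death walk --- is the principal obstacle; once it is in hand, Paley--Zygmund gives $\Pb_0(S=\infty)>0$, and the $0$--$1$ law promotes this to $\rho=0$, proving \eqref{eqn: no explosion condition}.
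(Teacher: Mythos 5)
Your first three paragraphs are correct and take a genuinely different route from the paper: you condition on the embedded path to turn explosion into the trajectory event $E=\{\sum_k(1-g_k)<\infty\}$, get the zero--one alternative from a harmonic-function/L\'evy argument (the paper instead gets it from Kolmogorov's zero--one law via an alternative construction of $X$ as a concatenation of independent pieces, Remark \ref{alt construction for mom of exp}), and then use the Green's-function identity $\E_0\sum_k(1-g_k)=\sum_i(1-\E\ee^{-\tau_i})\,G(0,i)=\sum_{k}\bigl(\sum_{i\le k}\nu_i\bigr)\delta_k$, which does settle the convergent half of \eqref{eqn: explosion condition} and the recurrent case of \eqref{eqn: no explosion condition} cleanly.

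However, the remaining case (transient chain with $\Sigma=\infty$), which is precisely the substance of \eqref{eqn: no explosion condition}, is not proved, and it contains two genuine gaps. First, the reduction from $S=\sum_k(1-g_k)$ to $\tilde S=\sum_i(1-\E\ee^{-\tau_i})V_i$ is invalid: the terms of $S$ contributed by a visit to $i$ equal $\Phi_i^+=1-\varphi_i^+$ or $\Phi_i^-=1-\varphi_i^-$ according to the direction of the step actually taken, whereas $\tilde S$ replaces them by the average $\Phi_i=p_i\Phi_i^++q_i\Phi_i^-$; being ``comparable in mean'' does not transfer almost-sure divergence between two different random series. Indeed, whenever $\sum_i\Phi_i=\infty$ one has $\tilde S=\infty$ trivially (since $V_i\ge1$ for every $i$ in the transient case), so $\tilde S$ carries no information there, while the behaviour of $S$ still depends on the direction-dependent visit counts. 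Second, the covariance bound you yourself call ``the principal obstacle'' is asserted, not proved. It is in fact standard: for $i\neq j$, splitting according to which state is visited first gives $\E_0[V_iV_j]=G(0,i)G(i,j)+G(0,j)G(j,i)\le 2\,G(0,i)G(0,j)$, so a Paley--Zygmund argument applied to the correct, direction-dependent sum (not to $\tilde S$) can be pushed through --- but as written, your proposal stops exactly at the step that separates \eqref{eqn: no explosion condition} from a first-moment computation. It is worth noting that the paper avoids second moments entirely here: assuming explosion has probability one, it bounds $F_i^n=1-\E_i\ee^{-\sigma_n}$ from below by $c\,(S_n(\Phi)-S_i(\Phi))$ with $c=f_0>0$, via the comparison of linear systems (Lemmas \ref{lemma solution profit}--\ref{lemma D-Y} applied to the system \eqref{eqn: system for H}), which immediately yields $S_n(\Phi)-S_i(\Phi)\le 1/c$ and hence convergence of the series; this is substantially lighter than the second-moment machinery your route requires.
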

\begin{theorem}[Implosion condition]\label{implosion theorem}
    For every initial distribution of $X$, the following alternative holds:
    \begin{align}
    &\Pb \{X \text{ implodes from }\infty\} = 1 \quad \Longleftrightarrow \quad \sum_{k = 0}^\infty \left(\sum_{i=k+1}^\infty \nu_i \right)\delta_k  < \infty \quad\text{and} \quad \sum_{k=0}^\infty \delta_k = \infty, \label{eqn: implosion condition} \\
    &\Pb \{X \text{ implodes from }\infty\} = 0 \quad \Longleftrightarrow \quad \sum_{k = 0}^\infty  \left( \sum_{i=k+1}^\infty \nu_i \right) \delta_k= \infty  \quad \text{ or } \quad \sum_{k=0}^\infty \delta_k < \infty. \label{eqn: no implosion condition}
\end{align}
\end{theorem}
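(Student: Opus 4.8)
The plan is to reduce implosion to an almost-sure finiteness statement about a single path-wise quantity and then to estimate that quantity with an embedded-chain Green's function. First I would use the coupling construction of $X$ from Section~\ref{alt construction}. Since the walk is nearest-neighbour, the descent from $i$ to $0$ decomposes as $\sigma_0\eqd\theta_1+\dots+\theta_i$ under $\Pb_i$, where $\theta_j$ is the first passage time from $j$ to $j-1$ and the $\theta_j$ are independent by the renewal structure. Realising all the $\theta_j$ on one probability space makes $S_i:=\theta_1+\dots+\theta_i$ nondecreasing in $i$, so $\sup_i\Pb_i(\sigma_0>C)=\sup_i\Pb(S_i>C)=\Pb(S_\infty>C)$ with $S_\infty=\sum_{j\ge1}\theta_j$, and implosion is therefore equivalent to $S_\infty<\infty$ a.s. Because the $\theta_j$ are independent and nonnegative, a tail ($0$--$1$ law) argument gives $S_\infty<\infty$ a.s. $\iff\prod_{j\ge1}\E\ee^{-\theta_j}>0\iff\sum_{j\ge1}(1-\phi_j)<\infty$, where $\phi_j:=\E\ee^{-\theta_j}$. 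This is where the Laplace transform at parameter $1$ (hence the factors $\E\ee^{-\tau_i^\pm}$ in $\nu_i$) enters intrinsically, and crucially the monotonicity of $S_i$ lets me read off tightness from a single fixed Laplace parameter.

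Second, I would dispose of the condition $\sum_k\delta_k=\infty$: it is exactly recurrence of the embedded birth-and-death chain, which guarantees $\theta_j<\infty$ a.s.\ and legitimises the decomposition above. If instead $\sum_k\delta_k<\infty$, the embedded chain is transient to $+\infty$, so $\Pb_i(\sigma_0=\infty)\to1$ as $i\to\infty$ and $\sup_i\Pb_i(\sigma_0>C)=1$ for every $C$; implosion fails, which accounts for the ``$\sum_k\delta_k<\infty$'' alternative in \eqref{eqn: no implosion condition}.

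Third, and this is the computational core, I would estimate $1-\phi_j=1-\E\ee^{-\theta_j}$. Conditioning on the embedded excursion path $\pi$ from $j$ to $j-1$, the waiting times are independent, so $\E[\ee^{-\theta_j}\mid\pi]=\prod_m(1-c^{(m)})$, where the product runs over the steps of $\pi$ and $c^{(m)}=1-\E\ee^{-\tau^{(m)}}$ is the per-step defect. Writing $C_j=\sum_m c^{(m)}$ for the total defect along $\pi$ and using $1-\prod_m(1-c^{(m)})\le C_j$ gives $1-\phi_j\le\E C_j$. The mean total defect is an embedded-chain Green's function computation: with $N_i^{(j)}$ the number of visits to $i$ before hitting $j-1$ when started from $j$, averaging the jump direction at each visit yields expected per-visit defect $c_i=1-\E\ee^{-\tau_i}$, while for a birth-and-death chain $\E N_i^{(j)}=\delta_{j-1}/(q_i\delta_{i-1})$ for $i\ge j$. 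Since $\nu_i=c_i/(q_i\delta_{i-1})$, this collapses to $\E C_j=\delta_{j-1}\sum_{i\ge j}\nu_i$, and after reindexing $k=j-1$, $\sum_{j\ge1}(1-\phi_j)\le\sum_{k\ge0}\delta_k\sum_{i>k}\nu_i$. Combined with the first two steps this already proves ``$\Longleftarrow$'' in \eqref{eqn: implosion condition}: finiteness of the scale--speed series plus recurrence forces implosion. I find it reassuring that the \emph{bare} scale $\delta$ surfaces automatically here, precisely because the Green's function is that of the embedded chain while time enters only through the cost $c_i$.

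The main obstacle is the converse (necessity), where subadditivity is useless and I need a matching \emph{lower} bound. The elementary inequality $1-\prod_m(1-c^{(m)})\ge\tfrac12\min(1,C_j)$ gives $1-\phi_j\ge\tfrac12\E[\min(1,C_j)]$, and the task becomes comparing $\E[\min(1,C_j)]$ with $\E C_j=\delta_{j-1}\sum_{i\ge j}\nu_i$. When the summands $\E C_j$ stay bounded away from $0$ the lower bound is immediate and $\sum_j(1-\phi_j)=\infty$; the delicate regime is $\E C_j\to0$, where I must exclude the scenario in which $C_j$ is typically tiny yet occasionally large. Ruling this out requires a tail/second-moment estimate on $C_j$, again via the Green's function (controlling $\E[C_j;C_j\ge1]$ relative to $\E C_j$), so as to show $\E[\min(1,C_j)]\asymp\E C_j$ and hence $\sum_j(1-\phi_j)=\infty$ whenever $\sum_k\delta_k\sum_{i>k}\nu_i=\infty$. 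I expect this truncation estimate to absorb most of the work. As a consistency check and possible shortcut I would exploit the duality with \Cref{explosion theorem}: the ascending passages $j\to j+1$ play the mirror role, the head sums $\sum_{i\le k}\nu_i$ are replaced by the tail sums $\sum_{i>k}\nu_i$, and the extra requirement $\sum_k\delta_k=\infty$ appears only on the implosion side, since returning from $\infty$ additionally forbids escape upward.
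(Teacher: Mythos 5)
Your Steps 1--3 are correct and, modulo packaging, are exactly the paper's argument: the paper also works with the independent descent times $\theta_j$ from Section \ref{alt construction}, also identifies implosion with $\sum_j(1-\E\ee^{-\theta_j})<\infty$ via Kolmogorov's zero--one law, handles $\sum_k\delta_k<\infty$ by transience, and bounds $1-\E\ee^{-\theta_j}$ above by the expected sum of per-step defects computed with the same embedded-chain Green's function $\delta_{j-1}/(q_i\delta_{i-1})$ (the paper phrases this as a comparison of the linear systems \eqref{eqn: system for F^lr} and \eqref{eqn: system for G^lr} via Lemmas \ref{lemma solution profit}--\ref{lemma D-Y}, rather than through the pathwise inequality $1-\prod_m(1-c^{(m)})\le C_j$, but the content is identical). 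So sufficiency is fine.

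The genuine gap is your Step 4 (necessity), and it is not merely unfinished: the lemma you propose to prove there, $\E[\min(1,C_j)]\asymp\E C_j$, is false, so no truncation or second-moment estimate can deliver it. Sketch of a counterexample: fix small $\epsilon$, set $M=\lceil 1/\epsilon\rceil$, let state $j$ have $p_j=\epsilon$ and negligible defect, let states $j+1,\dots,j+M$ have $p_i=q_i=1/2$ and defect $\Phi_i$ close to $1$, and put strong downward drift with negligible defects above $j+M$. Starting from $j$, an up-excursion occurs before the down-step only with probability $\epsilon$, so $1-\phi_j\le\E[\min(1,C_j)]\asymp\epsilon$; but each up-excursion is a symmetric walk through $M$ defect-one states, visiting each about twice on average, so $\E C_j\asymp\epsilon M\asymp1$. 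Thus $\E C_j$ can be of order one purely because of heavy states that the descent from $j$ rarely visits, and even your preliminary claim that ``$\E C_j$ bounded away from $0$'' immediately forces $1-\phi_j$ bounded away from $0$ fails. (The theorem itself survives because those same heavy states make $1-\phi_i$ of order one for every $i$ in the block, so the two \emph{series} diverge together --- a redistribution across levels, not a termwise bound.) The paper closes this direction with a global argument that sidesteps the issue: assuming implosion, $f^0=\E\ee^{-\rho_\infty}>0$ is a uniform lower bound on the Laplace transform of the remaining descent time from any state; writing $1-\E_i\ee^{-\sigma_l}$ as a sum over steps of (per-step defect) times (Laplace transform of the time remaining after that step) --- equivalently, noting that the coefficients in \eqref{eqn: system for F^lr} are at least $\Phi_i f^0$ and applying the comparison Lemma \ref{corollary inequality} --- gives $F^l\ge f^0G^l$, hence $G^l=\sum_{k\ge l}\bigl(\sum_{i>k}\nu_i\bigr)\delta_k\le 1/f^0<\infty$. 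You would need to replace your Step 4 by this (or an equivalent) global argument.
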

Proofs can be found in Section \ref{proof}.
\begin{remark}
    If we replace the Laplace transform $\E \ee^{-\tau_i}$ with $\E \ee^{-\lambda \tau_i}$ for some $\lambda > 0$, the results will stay the same.
\end{remark}

\subsection{Comparison with known results for diffusions}
Let us compare the conditions for explosion and implosion with known results for one-dimensional diffusions (see \cite[XV, \S 6]{karlin1981second} or \cite[V, \S 11]{Ito_Stoch_Proc}). Assume that $X$ is a one dimensional diffusion with the scale function $S$ and the speed measure $M$, where $S, M \colon \mathbb{R} \to \mathbb{R}$ are strictly increasing, with $S$ continuous. Then the boundary $+\infty$ is \emph{attainable} in finite time with positive probability if and only if
\begin{equation}\label{eqn: explosion diffusion}
    \int_{x_0}^\infty M [x_0, x]\, \mathrm{d}S(x) < \infty,
\end{equation}
where $M[x_0, x] = M(x) - M(x_0)$, and $x_0$ is arbitrary (convergence of the above integral is independent of $x_0$).

The boundary $+\infty$ is an \emph{entrance point} for a strong Markov process $X(t)$, $t > 0$, if $\lim_{t \to 0+} X(t) = +\infty$. In terms of the scale function and the speed measure of a diffusion, this happens if and only if 
\begin{equation}\label{eqn: implosion diffusion}
    \int_{x_0}^\infty M [x, \infty]\, \mathrm{d}S(x) < \infty \quad \text{and} \quad S(\infty) = \infty.
\end{equation}

Define the scale function and the speed measure using $\delta_k$, $\nu_i$ from \eqref{eqn: delta}, \eqref{eqn: nu} as
\begin{equation*}
    S(n) = 1 + \sum_{k = 1}^{n-1} \delta_k, \quad M(n) = \sum_{i=1}^n \nu_i, \quad n \in \mathbb{N}.
\end{equation*}
Then \eqref{eqn: explosion diffusion} is equivalent to our explosion condition~\eqref{eqn: explosion condition}, and \eqref{eqn: implosion diffusion} is equivalent to our implosion condition \eqref{eqn: implosion condition} if we replace the integrals by discreet sums.

\section{Examples}
In this section, we apply Theorems \ref{explosion theorem} and~\ref{implosion theorem} to specific types of processes. Section \ref{Markov case} covers the Markov case. We show that the explosion condition \eqref{eqn: explosion condition} coincides with a well-known regularity condition. We then provide new results for non-Markov processes. Section \ref{waiting times with first moments} covers processes that have waiting times with finite first moments. Section \ref{reg var tails} deals with the case of heavy-tailed waiting times.

The following will be useful throughout this section. Denote $\Phi_i = 1 - \E \ee^{-\tau_i}$, and rewrite the series from the explosion condition changing the order of summation:
\begin{equation}\label{eqn: explosion change of order}
    \sum_{k = 0}^\infty  \left(\sum_{i=0}^{k} \nu_i \right) \delta_k = \sum_{i=0}^\infty \sum_{k=i}^\infty \nu_i \delta_k = \sum_{i=0}^\infty \sum_{k=i}^\infty \Phi_i \frac{p_{1} \dots p_{i-1}}{q_1 \dots q_i} \frac{q_1 \dots q_k}{p_1 \dots p_k} = \sum_{i=0}^\infty \Phi_i \sum_{k=i}^\infty  \frac{q_{i+1} \dots q_k}{p_i \dots p_k}.
\end{equation}
 Analogously, rewrite the series from the implosion condition:
\begin{equation}\label{eqn: implosion change of order}
    \sum_{k = 0}^\infty  \left(\sum_{i=k+1}^{\infty} \nu_i \right) \delta_k = \sum_{i=1}^\infty \sum_{k=0}^{i-1} \nu_i \delta_k = \sum_{i=1}^\infty \sum_{k=0}^{i-1} \Phi_i \frac{p_{1} \dots p_{i-1}}{q_1 \dots q_i} \frac{q_1 \dots q_k}{p_1 \dots p_k} = \sum_{i=1}^\infty \Phi_i \sum_{k=0}^{i-1}  \frac{p_{k+1} \dots p_{i-1}}{q_{k+1} \dots q_i}.
\end{equation}
In these formulas, we agree that $q_{i+1} \dots q_k = 1$ if $i = k$, and $p_{k+1} \dots p_{i-1} = 1$ if  $k = i-1$. Note that convergence of either series implies 
\begin{equation}\label{eqn: Phi to 0}
    \lim_{i \to \infty} \Phi_i = 0.
\end{equation}
\subsection{Markov case}\label{Markov case}
Suppose $X$ is a birth-and-death Markov processes. In this case, the waiting times $\tau_i^+$ and $\tau_i^-$ are equally exponentially distributed with some rate parameters $a_i>0$. The Laplace transform is $\mathsf{E} \mathrm{e}^{-\lambda \tau_i} = \frac{a_i}{\lambda + a_i}$, and hence $\Phi_i =1 - \mathsf{E} \mathrm{e}^{- \tau_i} = \frac{1}{1 + a_i}$. 

Consider the explosion condition \eqref{eqn: explosion condition}. Then using \eqref{eqn: explosion change of order}, \eqref{eqn: Phi to 0}, and the fact that $\frac{1}{1+a_i} \sim \frac{1}{a_i}$ as~$a_i \to \infty$, we arrive at 
\begin{equation}\label{eqn: Mark. case exp. cond.}
    \sum_{i = 0}^{\infty} \frac{1}{ a_i} \sum_{k = i}^\infty \frac{q_{i+1} \dots q_k}{p_i \dots p_k} < \infty.
\end{equation}
This is a well-known explosion condition for Markov birth-and-death processes (\cite[IV,~\S~5]{D_Y}).

Now consider a homogeneous process with $p_i \equiv p$ and $q_i \equiv q = 1 - p$. If $p > q$, the series in \eqref{eqn: Mark. case exp. cond.} simplifies as follows:
\begin{equation*}
    \sum_{i = 0}^{\infty} \frac{1}{ a_i} \sum_{k = i}^\infty \frac{q_{i+1} \dots q_k}{p_i \dots p_k} = \sum_{i = 0}^{\infty} \frac{1}{ a_i p} \sum_{l = 0}^\infty \left(\frac{q}{p}\right)^l = \sum_{i = 0}^{\infty} \frac{1}{ a_i p} \frac{1}{1 - \frac{q}{p}} = \frac{1}{p - q} \sum_{i = 0}^{\infty} \frac{1}{ a_i}.
\end{equation*}
Thus, the explosion condition for a homogeneous birth-and-death Markov process is
\begin{equation}\label{eqn: Mark. case exp. cond. homog. case}
    \sum_{i = 0}^{\infty} \frac{1}{ a_i} < \infty \quad \text{and} \quad p > q.
\end{equation}
It coincides with the explosion condition for the Yule process of pure birth (see \cite[XIV, \S8]{Feller_Introduction}).
The condition makes intuitive sense, because the average number of visits of each state by the Markov chain $(X_k)$ equals $1/(p-q)$.

Consider implosion condition \eqref{eqn: implosion condition}. Analogously to explosion, using \eqref{eqn: implosion change of order} and \eqref{eqn: Phi to 0}, we get the implosion condition for a birth-and-death Markov process:
\begin{equation}\label{eqn: Mark. case imp. cond.}
    \sum_{i = 1}^\infty \frac{1}{a_i}\sum_{k=0}^{i-1}  \frac{p_{k+1} \dots p_{i-1}}{q_{k+1} \dots q_i} < \infty \quad \text{and} \quad \sum_{k=0}^\infty \delta_k = \infty.
\end{equation}

Now consider a homogeneous process
with $p_i \equiv p$ and $q_i \equiv q = 1 - p$. If $p < q$, we get
\begin{equation*}
    \sum_{k=0}^{i-1}  \frac{p_{k+1} \dots p_{i-1}}{q_{k+1} \dots q_i} = \sum_{k=0}^{i-1} \left(\frac{p}{q}\right)^{i-k} \frac{1}{p} = \frac{1 - \Big(\frac{p}{q}\Big)^i}{q-p}.
\end{equation*}
For a symmetric walk with $p = q = 1/2$, we get
\begin{equation*}
    \sum_{k=0}^{i-1}  \frac{p_{k+1} \dots p_{i-1}}{q_{k+1} \dots q_i} = 2 i.
\end{equation*}
Therefore, the condition \eqref{eqn: Mark. case imp. cond.} holds if and only if one of the following is true:
\begin{align}
    \sum_{i = 1}^\infty \frac{1}{a_i} < \infty \quad \text{and} \quad p < q, \label{eqn: Mark. case imp. cond. homog. case p<q} \\
    \sum_{i = 1}^\infty \frac{i}{a_i} < \infty \quad \text{and} \quad p = q. \label{eqn: Mark. case imp. cond. case p=q}
\end{align}

\subsection{Waiting times with finite first moments}\label{waiting times with first moments}
Suppose $\tau_i^\pm \overset{d}{=} \tau^\pm /a_i^\pm$, where $\tau^\pm$ are positive random variables, and $a_i^\pm$ are positive numbers with $\lim_{i \to \infty} a_i^\pm = \infty$. 
Assume that $\E \tau^\pm < \infty$. It is well known that the asymptotics of the Laplace transform in the neighborhood of zero is $1 - \E \ee^{-\lambda \tau} \sim \lambda \E \tau$ as $\lambda \to 0+$. Therefore,
\begin{align*}
    1 - \E \ee^{-\tau_i} &= 1-p_i\E \ee^{-\tau_i^+}-q_i\E \ee^{-\tau_i^-} 
    = 1-p_i\E \ee^{-\tau^+ /a_i^+}-q_i\E \ee^{-\tau^- /a_i^-}\\ 
    &= p_i\left(1 -\E \ee^{-\tau^+ /a_i^+}\right)+ q_i\left(1-\E \ee^{-\tau^- /a_i^-}\right)\sim \frac{p_i}{a_i^+} \E \tau^+ + \frac{q_i}{a_i^-} \E \tau^-, \quad i \to \infty.
\end{align*}
In particular, if $\E\tau^+ = \E\tau^-$ and $a_i^+ = a_i^-$, then the conditions for explosion and implosion in this case are identical to conditions \eqref{eqn: Mark. case exp. cond.}--\eqref{eqn: Mark. case imp. cond. case p=q} in the Markov case from Section \ref{Markov case}.

\subsection{Waiting times with regularly varying tails}\label{reg var tails}

Suppose $\tau_i^\pm \overset{d}{=} \tau^\pm /a_i^\pm$, where $\tau^\pm$ are positive random variables with distribution functions~$F^\pm$, and $a_i^\pm$ are positive numbers with $\lim_{i \to \infty} a_i^\pm = \infty$. Suppose that $1-F^\pm$ varies regularly at $\infty$ with exponent $-\alpha^\pm$, where $0 < \alpha^\pm < 1$. That is,
\begin{equation*}
    1 - F^\pm(x) = x^{-\alpha^\pm} l^\pm(x), \quad x \to +\infty,
\end{equation*}
where $l^\pm$ are slowly varying functions at $\infty$ (see \cite[I, \S4]{bingham1989regular}). Then from a Tauberian-Abelian Theorem (see \cite[XIII, \S5, Thm.~1]{Feller_Introduction}), we get the following asymptotics of the Laplace transform:
\begin{equation*}
    1 - \varphi^\pm(\lambda) \sim \Gamma(1-\alpha^\pm) (1 - F^\pm(1/\lambda) ), \quad \lambda \to {0+}.
\end{equation*}
The explosion condition \eqref{eqn: explosion condition} becomes
\begin{equation}\label{eqn: reg.varying waiting times exp. condition}
    \sum_{i = 0}^{\infty} \left(1 - p_iF^+(a_i^+) - q_iF^-(a_i^-) \right) \sum_{k = i}^\infty \frac{q_{i+1} \dots q_k}{p_i \dots p_k} < \infty.
\end{equation}
The implosion condition \eqref{eqn: implosion condition} becomes
\begin{equation}\label{eqn: reg var imp. cond.}
    \sum_{i = 1}^\infty \left(1 - p_iF^+(a_i^+) - q_iF^-(a_i^-) \right)\sum_{k=0}^{i-1}  \frac{p_{k+1} \dots p_{i-1}}{q_{k+1} \dots q_i} < \infty \quad \text{and} \quad \sum_{k=0}^\infty \delta_k = \infty.
\end{equation}
\subsection{Stable waiting times}
An interesting special case of Section \ref{reg var tails} is the case where $\tau^+$ and $\tau^-$ are equally distributed positive random variables with $\alpha$-stable distribution, $\alpha\in(0,1)$, and $a_i^+ = a_i^- = a_i$. Then the Laplace transform is
\begin{equation*}
    \E \ee^{-\tau_i} = \exp\{-c a_i^{-\alpha}\} , \quad i \ge 0,
\end{equation*}
for some constant $c > 0$ (see \cite[XIII, \S6]{Feller_Introduction}). Therefore, the following equivalence holds:
\begin{equation*}
    1 - \E \ee^{-\tau_i} = 1 - \exp\{-c a_i^{-\alpha}\} \sim c a_i^{-\alpha}, \quad a_i \to \infty.
\end{equation*}
The explosion condition \eqref{eqn: explosion condition} becomes
\begin{equation}\label{eqn: heavy tailed waiting times exp. condition}
    \sum_{i = 0}^{\infty} \frac{1}{a_i^\alpha} \sum_{k = i}^\infty \frac{q_{i+1} \dots q_k}{p_i \dots p_k} < \infty.
\end{equation}
The implosion condition \eqref{eqn: implosion condition} becomes
\begin{equation}\label{eqn: heavy tailed waiting times imp. condition}
    \sum_{i = 1}^{\infty} \frac{1}{a_i^\alpha} \sum_{k = 0}^{i-1}\frac{p_{k+1} \dots p_{i-1}}{q_{k+1} \dots q_i} < \infty \quad \text{and} \quad \sum_{k=0}^\infty \delta_k = \infty.
\end{equation}
If $p_i \equiv p$ and $q_i \equiv q$, condition \eqref{eqn: heavy tailed waiting times exp. condition} becomes
\begin{equation}\label{eqn: heavy tailed waiting times exp. condition homog. case}
\sum_{i = 0}^{\infty} \frac{1}{a_i^\alpha} < \infty \quad \text{and} \quad p > q.
\end{equation}
Condition \eqref{eqn: heavy tailed waiting times imp. condition} becomes one of the following:
\begin{align}
     & \sum_{i = 0}^{\infty} \frac{1}{a_i^\alpha} < \infty \quad \text{and} \quad p < q, \\
    &\sum_{i = 0}^{\infty} \frac{i}{a_i^\alpha} < \infty \quad \text{and} \quad p = q.\label{eqn: heavy tailed waiting times imp. condition homog. case}
\end{align}

\begin{remark} Conditions \eqref{eqn: heavy tailed waiting times exp. condition}--\eqref{eqn: heavy tailed waiting times imp. condition homog. case} coincide with conditions \eqref{eqn: Mark. case exp. cond.}--\eqref{eqn: Mark. case imp. cond. case p=q} from the Markov case if we formally set $\alpha = 1$.
\end{remark}

\section{Proof}\label{proof}
In this section, we prove Theorems \ref{explosion theorem} and \ref{implosion theorem}. Sections \ref{alt construction} and \ref{transience, recc and scale} contain some useful auxiliary constructions and results. Section \ref{preliminaries} introduces key objects, tools, and lemmas. The main arguments are presented in Sections \ref{regularity proof} and \ref{implosion proof}, which establish the explosion and implosion conditions, respectively.

\subsection{Alternative construction}\label{alt construction}

We give an alternative construction of the process $X$ in order to define implosion in a more suitable manner. This construction will also be useful for investigating explosion. Let $\{Y_n\}_{n \ge 1}$ be a sequence of independent processes from Definition \ref{def of X} with $Y_n(0) = n$ for $n \ge 1$. Define stopping times
\begin{equation*}
    \theta_n = \inf\{t \ge 0 \mid Y_n(t) = n-1 \}, \quad n \ge 1.
\end{equation*}
An example of trajectories of such processes is given in Figure \ref{fig: trajectories}, each trajectory $Y_n$ is drawn up to the respective stopping time $\theta_n$.
\begin{figure}[bp]
    \centering
    \begin{tikzpicture}

    \draw[->] (0,0) -- (12,0) node[below] {$t$}; 
    \draw[->] (0,0) -- (0,5) node[left] {};    
    
    \node[left] at (0,0) {$0$};
    \node[left] at (0,1) {$1$};
    \node[left] at (0,2) {$2$};
    \node[left] at (0,3) {$3$};
    \node[left] at (0,4) {$4$};

    \draw (2,0) node[below] {$\theta_1$};
    \draw (3,0) node[below] {$\theta_2$};
    \draw (5,0) node[below] {$\theta_3$};
    
    \draw[thick, blue] (0,1) -- (2,0); 
    \fill[black] (0,1) circle (1pt); 
    \fill[black] (2,0) circle (1pt); 
    
    \draw[thick, red] (0,2) -- (1.5,3) -- (2.1,2) -- (3,1); 
    \fill[black] (0,2) circle (1pt); 
    \fill[black] (1.5,3) circle (1pt); 
    \fill[black] (2.1,2) circle (1pt); 
    \fill[black] (3,1) circle (1pt); 
    
    \draw[thick, green] (0,3) -- (0.6,4) -- (2,3) -- (3.1,4) -- (3.8,3) -- (5,2); 
    \fill[black] (0,3) circle (1pt); 
    \fill[black] (0.6,4) circle (1pt); 
    \fill[black] (2,3) circle (1pt); 
    \fill[black] (3.1,4) circle (1pt); 
    \fill[black] (3.8,3) circle (1pt); 
    \fill[black] (5,2) circle (1pt); 
    
    \node[above right] at (1,0.3) {$Y_1(t)$};
    \node[above right] at (2.5,1.3) {$Y_2(t)$};
    \node[above right] at (4.4,2.3) {$Y_3(t)$};
    
    \draw[dashed] (3,0) -- (3,1);
    \draw[dashed] (5,0) -- (5,2);
    \draw[dashed] (0,1) -- (3,1);
    \draw[dashed] (0,2) -- (5,2);
    \end{tikzpicture}
    \caption{Trajectories of $Y_1$, $Y_2$ and $Y_3$ up to their stopping times.}
    \label{fig: trajectories}
\end{figure}
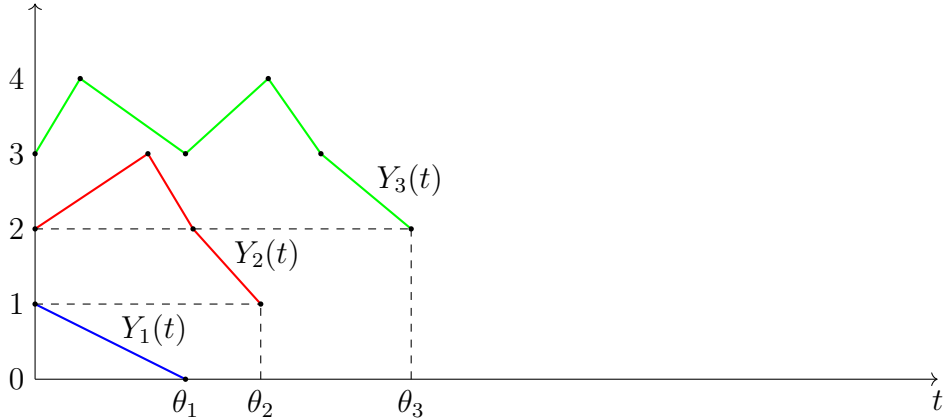
For a better visual representation, we modify the trajectories to be continuous piece-wise linear instead of piece-wise constant. We then construct a sequence of processes $\{Z_n\}_{n \ge 1}$ by shifting the trajectories of process $Y_n$ and joining them one to another as it is shown in Figure \ref{fig: trajectory of Z3}. 
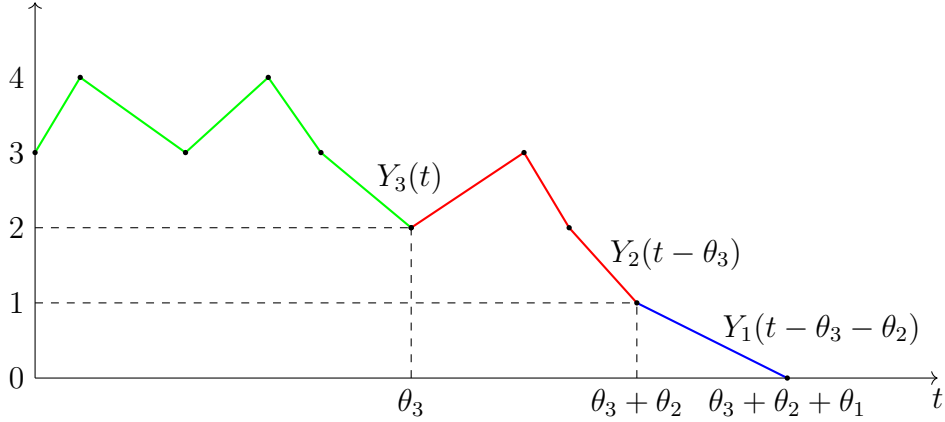
\begin{figure}[tp]
    \centering
        \begin{tikzpicture}

    \draw[->] (0,0) -- (12,0) node[below] {$t$}; 
    \draw[->] (0,0) -- (0,5) node[left] {};    
    
    \node[left] at (0,0) {$0$};
    \node[left] at (0,1) {$1$};
    \node[left] at (0,2) {$2$};
    \node[left] at (0,3) {$3$};
    \node[left] at (0,4) {$4$};

    \draw (10,0) node[below] {$\theta_3 + \theta_2 + \theta_1$};
    \draw (8,0) node[below] {$\theta_3 + \theta_2$};
    \draw (5,0) node[below] {$\theta_3$};
    
    \draw[thick, blue] (8,1) -- (10,0); 
    
    \fill[black] (10,0) circle (1pt); 
    
    \draw[thick, red] (5,2) -- (6.5,3) -- (7.1,2) -- (8,1); 
    \fill[black] (5,2) circle (1pt); 
    \fill[black] (6.5,3) circle (1pt); 
    \fill[black] (7.1,2) circle (1pt); 
    \fill[black] (8,1) circle (1pt); 
    
    \draw[thick, green] (0,3) -- (0.6,4) -- (2,3) -- (3.1,4) -- (3.8,3) -- (5,2); 
    \fill[black] (0,3) circle (1pt); 
    \fill[black] (0.6,4) circle (1pt); 
    \fill[black] (2,3) circle (1pt); 
    \fill[black] (3.1,4) circle (1pt); 
    \fill[black] (3.8,3) circle (1pt); 
    \fill[black] (5,2) circle (1pt); 
    
    \node[above right] at (9,0.3) {$Y_1(t-\theta_3 - \theta_2)$};
    \node[above right] at (7.5,1.3) {$Y_2(t-\theta_3)$};
    \node[above right] at (4.4,2.3) {$Y_3(t)$};
    
    \draw[dashed] (8,0) -- (8,1);
    \draw[dashed] (5,0) -- (5,2);
    \draw[dashed] (0,1) -- (8,1);
    \draw[dashed] (0,2) -- (5,2);
    \end{tikzpicture}
    \caption{Trajectory of $Z_3$ obtained by shifting and joining $Y_1$, $Y_2$ and $Y_3$.}
    \label{fig: trajectory of Z3}
\end{figure}
In general, $Z_n$ is a continuous-time random walk starting at $n$ defined by the formula
\begin{equation}\label{eqn: Z_n}
    Z_n(t) = \begin{cases}
        Y_n(t), & \quad t \in [0, \ \theta_n), \\
        Y_{n-1}(t-\theta_n), & \quad t \in [\theta_n, \ \theta_n + \theta_{n-1}),\\
        Y_{n-2} (t - \theta_n - \theta_{n-1}), &\quad t \in [\theta_n + \theta_{n-1}, \ \theta_n + \theta_{n-1} + \theta_{n-2}),\\
        \dots\\
        Y_1 (t - \theta_n - \dots - \theta_2),& \quad t \in [\theta_n + \dots + \theta_2, \ \theta_n + \dots + \theta_2 + \theta_1), \\
        Y_0(t - \theta_n - \dots - \theta_2 - \theta_1), &\quad t \in [\theta_n + \dots + \theta_2 + \theta_1, \ \infty ).
    \end{cases}
\end{equation} 
Then $\rho_\infty = \sum_{k=1}^\infty \theta_k$ represents the first time of hitting $0$ when starting at infinity. Now we can define implosion in a more suitable way.
\begin{definition}\label{alt def of implosion}
    We say that the process $X$ \emph{implodes} from infinity if $\rho_\infty = \sum_{k=1}^\infty \theta_k < \infty $. 
\end{definition}

\begin{remark}
    Since $\theta_k$ are independent random variables, the event $\{ \rho_\infty < \infty\}$ happens with probability either $0$ or $1$ by Kolmogorov's zero-one law.
\end{remark}

\begin{remark}
    Consider the compactified space $\mathbb{N}_0\cup\{\infty\}$ with the metric $d(i, j) = |\arctan(i) - \arctan(j)|$. It is not difficult to show that if implosion happens, the sequence $\{Z_n\}_{n \ge 0}$ defined in \eqref{eqn: Z_n} converges in the space $D([0, \infty), \mathbb{N}_0\cup\{\infty\})$ of c\`adl\`ag functions in Skorokhod topology almost surely. 
\end{remark}

\begin{remark}\label{alt construction for mom of exp}
The alternative construction can be used to define the moment of explosion, too. This turns out to be very useful, because we immediately get that explosion also happens with probability either $0$, or $1$. Again, let $\{Y_n\}_{n \ge 0}$ be a sequence of independent processes from Definition \ref{def of X} such that $Y_n(0) = n$ for $n \ge 0$. Define stopping times
\begin{equation*}
    \eta_n = \inf\{t \ge 0 \mid Y_n(t) = n+1 \}, \quad n \ge 1.
\end{equation*}
Then the connection with $\sigma_\infty$ defined in \ref{def of moment of exp} is the following: conditioned on $X(0) = i$,
\begin{equation*}
    \sigma_\infty = \lim_{n \to \infty}\sigma_n = \lim_{n \to \infty} \sum_{k = i + 1}^n (\sigma_k - \sigma_{k-1}) \overset{d}{=} \lim_{n \to \infty} \sum_{k = i + 1}^n \eta_k = \sum_{k = i + 1}^\infty \eta_k.
\end{equation*}
\end{remark}

\subsection{Transience, recurrence and the canonical scale}\label{transience, recc and scale}
Recall that a Markov chain $(X_k)$ is called \emph{transient} if, starting from any state, there is a positive probability of never returning to it. The Markov chain is called \emph{recurrent} if it returns to each initial state with probability one. It is straightforward that explosion of the process $X$ may occur only if its embedded Markov chain $(X_k)$ is transient, and implosion may occur only if it is recurrent. To formulate analytical conditions of transience and recurrence, let us introduce the \emph{canonical scale} (see \cite[p.~157]{D_Y}). Consider a set $\{u_0, u_1, u_2, \dots \}$ such that  $u_0 = 0$, $u_1 = 1$,
\begin{equation}\label{eqn: scale}
     u_i = 1 + \delta_1 + \dots + \delta_{i-1}= 1 + \frac{q_1}{p_1} + \dots + \frac{q_1 \dots q_{i-1}}{p_1 \dots p_{i-1}} , \quad i \ge 2.
 \end{equation}
One useful consequence is that for any states $l < i < r$ the probabilities of leaving the interval $(l, r)$ through the right-hand side or through the left-hand side are
\begin{equation}\label{eqn: exit probabilities}
    \Pb_i \{\sigma_l > \sigma_r\} = \frac{u_i - u_l}{u_r - u_l}, \quad \Pb_i \{\sigma_l < \sigma_r\} = \frac{u_r - u_i}{u_r - u_l},
\end{equation}
respectively. 
Next, set by definition
 \begin{equation*}
    r  = \lim_{i \to \infty} u_i 
    =  1 + \sum_{k = 1}^\infty \delta_k 
    = 1 + \sum_{k = 1}^\infty \frac{q_1 \dots q_k}{p_1 \dots p_k}.
\end{equation*}
The Markov chain $(X_k)$ is transient if $r < \infty$ and recurrent if $r = \infty$ (see \cite{D_Y}, IV, \S 4).

\subsection{Preliminaries}\label{preliminaries}
Before we begin the proof of theorems, we need to do some preliminary work. First, introduce the conditional Laplace transform (at the point $1$) of the moment of explosion
    \begin{equation*}
        f_i = \mathsf{E}_i \mathrm{e}^{-\sigma_\infty} = \mathsf{E} [\mathrm{e}^{-\sigma_\infty} \mid X(0) = i ], \quad i \ge 0,
    \end{equation*}
and of the hitting times of states $n \ge 0$
    \begin{equation*}
        f_i^n = \mathsf{E}_i \mathrm{e}^{-\sigma_n} = \mathsf{E} [\mathrm{e}^{-\sigma_n} \mid X(0) = i ], \quad 0 \le i \le n.
    \end{equation*}
\begin{lemma}\label{lemma convergence of f} 
The following properties hold:
\begin{enumerate}[label={\upshape(\arabic*)}]
    \item For a fixed initial state $i\ge 0$, the sequence $\{f_i^n\}_{n \ge i}$ is monotone decreasing;
    \item For a fixed end state $n \in \mN$, the values $f_0^n$, $f_1^n$,\dots, $f_n^n$ are increasing;
    \item For all $i\ge 0$, $\lim_{n \to \infty} f_i^n = f_i$.
\end{enumerate}
\end{lemma}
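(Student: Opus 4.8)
The plan is to handle the three items separately, since they rest on different facts. Items (1) and (3) are monotonicity-and-convergence statements about the hitting times $\sigma_n$, while (2) needs a genuine stochastic comparison, which I would extract from the regeneration structure of $X$ at first-passage times. The single observation underlying everything is that $X$ is a nearest-neighbour walk, so starting from $i$ it cannot reach a state $m$ without first visiting every state strictly between $i$ and $m$; in particular $\sigma_n \le \sigma_{n+1}$ almost surely whenever $i \le n$.

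For item (1), this inequality and the monotonicity of $x \mapsto \ee^{-x}$ give $\ee^{-\sigma_{n+1}} \le \ee^{-\sigma_n}$ pointwise, and applying $\E_i$ yields $f_i^{n+1} \le f_i^n$. For item (3), the same remark shows $\{\sigma_n\}_{n \ge i}$ is nondecreasing, so by Definition~\ref{def of moment of exp} we have $\sigma_n \uparrow \sigma_\infty$ as $n \to \infty$ (with $\sigma_\infty = +\infty$ on the non-explosion event and the convention $\ee^{-\infty} := 0$). Hence $\ee^{-\sigma_n} \downarrow \ee^{-\sigma_\infty}$ pointwise while staying bounded by $1$, and dominated convergence gives $f_i^n = \E_i \ee^{-\sigma_n} \to \E_i \ee^{-\sigma_\infty} = f_i$.

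The crux is item (2). Fix $n$ and $i < n$. Since $X$ is nearest-neighbour, to reach $n$ from $i$ it must first hit $i+1$; write $\sigma_{i+1}$ for that first-passage time and note $\sigma_{i+1} > 0$ almost surely. At the instant the embedded chain first enters $i+1$ — a jump epoch $T_m$ with $X_m = i+1$ — the subsequent evolution of $X$ uses only the post-$m$ increments of the embedded chain together with fresh, independent copies of the waiting times, so it is distributed as an independent walk started at $i+1$ and is independent of $\sigma_{i+1}$. Consequently, under $\Pb_i$, the time $\sigma_n$ has the same law as a sum $\sigma_{i+1} + \widetilde{\sigma}_n$ of two independent terms with $\widetilde{\sigma}_n \eqd \sigma_n$ under $\Pb_{i+1}$. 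Factorizing the Laplace transform,
\[
    f_i^n = \E_i \ee^{-\sigma_n} = \E_i \ee^{-\sigma_{i+1}} \cdot f_{i+1}^n \le f_{i+1}^n,
\]
because $\E_i \ee^{-\sigma_{i+1}} \le 1$. Combined with $f_n^n = \E_n \ee^{-\sigma_n} = 1$ (as $\sigma_n = 0$ under $\Pb_n$), this gives the chain of inequalities $f_0^n \le f_1^n \le \dots \le f_n^n = 1$.

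I expect the main obstacle to be making the regeneration in item (2) rigorous: unlike a Markov process, $X$ has no continuous-time Markov property, so the restart at the random time $\sigma_{i+1}$ must be justified directly from the construction. The clean point is that $\sigma_{i+1}$ coincides with a jump epoch $T_m$, and that semi-Markov processes of this type regenerate precisely at jump epochs, since the next waiting time $\tau^m$ is a fresh copy and no partially elapsed waiting time straddles $T_m$. The alternative construction of Section~\ref{alt construction}, which decomposes first-passage times into the independent blocks $\eta_k$, $\theta_k$, encodes exactly this decomposition and is the most economical way to supply the required independence.
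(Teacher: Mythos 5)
Your proposal is correct and takes essentially the same approach as the paper: parts (1) and (3) follow from the pathwise monotonicity $\sigma_n \le \sigma_{n+1}$ together with monotone (or dominated) convergence, and part (2) rests on the regeneration of $X$ at the jump epoch $\sigma_{i+1}$, which is exactly the independent-block structure that the paper's alternative construction (Remark \ref{alt construction for mom of exp}) encodes. Your one-step factorization $f_i^n = \E_i \ee^{-\sigma_{i+1}} f_{i+1}^n$ is just the first step of the paper's decomposition of $\sigma_n$ into independent passage times, so the two arguments coincide.
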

\begin{proof}
    We immediately get properties (1) and (2) as a consequence of the alternative construction of $X$ (see Remark \ref{alt construction for mom of exp}). Property (3) follows from the Theorem of monotone convergence.
\end{proof}
Let us establish a connection between the Laplace transform and the moment of explosion. 
\begin{lemma}\label{lemma Lap. transf. mom. of exp. link}
Let $P_0$ be the initial distribution of $X$. The following alternative holds:
\begin{align*}
   (a) \lim_{i \to \infty} f_i = 1 &\Longleftrightarrow f_i > 0 \text{ for some } i 
        &\quad (b)\lim_{i \to \infty} f_i = 0 &\Longleftrightarrow f_i = 0 \text{ for some } i \\[-4pt]
    & \Longleftrightarrow f_i > 0 \text{ for all }i 
        && \Longleftrightarrow f_i = 0 \text{ for all }i\\
    & \Longleftrightarrow \sigma_\infty < \infty \ P_0\text{-a.s.}&&\Longleftrightarrow \sigma_\infty = \infty \ P_0\text{-a.s.}
\end{align*}
\end{lemma}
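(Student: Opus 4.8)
The plan is to prove the statement by establishing the chain of equivalences in two stages: first relating the probabilistic statements about $\sigma_\infty$ to the pointwise behavior of $f_i$, then relating both to the limiting value $\lim_{i\to\infty} f_i$. Throughout I would use the alternative construction from Remark~\ref{alt construction for mom of exp}, which gives $\sigma_\infty \eqd \sum_{k=i+1}^\infty \eta_k$ conditioned on $X(0)=i$, where the $\eta_k$ are independent. This independence is the key structural fact, because it lets me invoke Kolmogorov's zero--one law: the event $\{\sigma_\infty < \infty\}$ has probability $0$ or $1$, so the dichotomy $\sigma_\infty < \infty$ a.s.\ versus $\sigma_\infty = \infty$ a.s.\ is exhaustive, and it suffices to connect each column to the corresponding alternative.

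First I would handle the equivalences within each column that are purely pointwise. Since $f_i = \E_i \ee^{-\sigma_\infty} \in [0,1]$, we have $f_i > 0 \iff \sigma_\infty < \infty$ with positive probability, and $f_i = 0 \iff \sigma_\infty = \infty$ a.s.\ under $\Pb_i$. Combined with the zero--one law applied under $\Pb_i$, positivity of $f_i$ forces $\sigma_\infty < \infty$ $\Pb_i$-a.s. The step from ``for some $i$'' to ``for all $i$'' I would argue by communication of states: from any state $i$ the walk reaches any other state $j$ with positive probability in finitely many steps (hence in finite time), and on that event the residual explosion time has the law of $\sigma_\infty$ started from $j$; so $f_i$ and $f_j$ are simultaneously positive or simultaneously zero. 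Formally, conditioning on a finite path from $i$ to $j$ gives a bound $f_i \ge c\, f_j$ for some $c > 0$ and vice versa, which yields the ``some $\iff$ all'' equivalence in both columns.

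The less routine part is linking the pointwise dichotomy to the \emph{limit} $\lim_{i\to\infty} f_i$. For column (b), $f_i = 0$ for all $i$ trivially gives the limit $0$. For column (a), the substantive claim is that $f_i > 0$ for all $i$ forces $\lim_{i\to\infty} f_i = 1$, not merely a positive limit. Here I would use the representation $f_i = \E \exp\{-\sum_{k=i+1}^\infty \eta_k\}$: as $i \to \infty$ the sum $\sum_{k=i+1}^\infty \eta_k$ is a tail of a convergent series (convergent a.s.\ precisely because we are in the explosive regime), so it tends to $0$ almost surely, and by dominated convergence $f_i \to 1$. The monotonicity from Lemma~\ref{lemma convergence of f}, together with $f_i = \E_i[\ee^{-\eta_i}\, f_{i+1}(\text{residual})]$-type recursions, would make this rigorous: the sequence $(f_i)$ is monotone increasing in $i$ (by part~(2) of Lemma~\ref{lemma convergence of f} in the limit $n\to\infty$ via part~(3)), bounded by $1$, so the limit exists, and the tail-sum argument identifies it as $1$.

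\textbf{Main obstacle.} I expect the delicate point to be the implication in column (a) that the limit equals exactly $1$ rather than some value in $(0,1)$. One must rule out the scenario where explosion occurs a.s.\ but the explosion time fails to concentrate near $0$ as the starting state recedes to infinity. The resolution is precisely that $\sum_{k=i+1}^\infty \eta_k$, being the tail of an a.s.-convergent series, must vanish as $i \to \infty$; the monotonicity and the zero--one law ensure we are genuinely in the convergent regime so that this tail-vanishing is available. Making the interchange of limit and expectation clean, and justifying that $f_i > 0$ for all $i$ places us in the explosive (rather than mixed) regime, is where the argument requires care.
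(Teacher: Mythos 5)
Your proposal is correct and takes essentially the same route as the paper: the representation $f_i = \E\, \ee^{-\sum_{k=i+1}^\infty \eta_k}$ from the alternative construction, Kolmogorov's zero--one law for the exhaustive dichotomy, and the tail-of-an-a.s.-convergent-series plus bounded convergence argument to identify the limit as $1$ (respectively $0$); the paper's proof is just a terser statement of exactly this. Your additional communication-of-states argument for the ``some $\Longleftrightarrow$ all'' step is fine but redundant, since the tail event $\{\sum_{k>i}\eta_k<\infty\}$ is the same for every $i$ (the $\eta_k$ being a.s.\ finite), so the zero--one law already yields it.
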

\begin{proof}
From the alternative construction of $X$ (see Remark \ref{alt construction for mom of exp}), we get
    \begin{equation*}
        f_i = \E_i \ee^{-\sigma_\infty} =  \E \ee^{-\sum_{k = i + 1}^\infty \eta_k},
    \end{equation*}
    where $\eta_k$ are independent random variables. Therefore, the statement follows from Kolmogorov's zero-one law.
\end{proof}
Now let us establish conditions that provide $\lim_{i \to \infty} f_i = 1$. To do this, we examine the Laplace transform $f_i^n$ of hitting times more closely. Suppose $n \in \mathbb{N}$ is fixed. Recall that $\{\tau_i^\pm\}$ are the waiting times of the process $X$ from Definition \ref{def of X}, and denote $\varphi_i^\pm = \mathsf{E} \mathrm{e}^{-\tau_i^\pm}$ for $i \ge 0$. Conditioned on $X(0) = n$, $\sigma_n = 0$ almost surely,~hence $f_n^n = 1$. For $X(0) = i \in \{0, \dots, n-1\}$, we can rewrite $\sigma_n = \tau + \tilde{\sigma}_n$, where $\tau$ is the first moment of jump, and $\tilde{\sigma}_n$ is the remaining time before the process hits $n$. By construction of $X$, random variables $\tau$ and $\tilde{\sigma}_n$ are independent. We arrive at the system of equations
\begin{equation}\label{eqn: system for f}
\begin{aligned}
    &f_0^n = \varphi_0^+ f_1^n,\\
    &f_i^n = p_i \varphi_i^+ f_{i+1}^n + q_i \varphi_i^- f_{i-1}^n, \quad  1 \le i \le n-1,\\
    &f_n^n = 1.
\end{aligned}
\end{equation}
This system is difficult to study because of the multiplication by $\varphi_i^\pm$. The idea then is to make it additive in some sense. Denote $F_i^n = 1 - f_i^n$ and $\Phi_i^\pm = 1 - \varphi_i^\pm$. In this notation, the above system transforms as follows:
\begin{equation}\label{eqn: system for F}
\begin{aligned}
    &F_0^n = \Phi_0^+ (1 - F_1^n) + F_1^n,\\
    &F_i^n =  p_i \Phi_i^+ (1 - F_{i+1}^n) + q_i \Phi_i^- (1 - F_{i-1}^n ) + p_i F_{i+1}^n + q_i F_{i-1}^n, \quad  1 \le i \le n-1,\\
    &F_n^n = 0.
\end{aligned}
\end{equation}

Recall a few results about linear recurrent equations and their probabilistic interpretations from the theory of Markov processes on a countable state space. Consider a system
\begin{equation}\label{eqn: system for y}
\begin{aligned}
    &y_0 = A_0 + y_1,\\
    &y_i = A_i  + p_i y_{i+1} + q_i y_{i-1}, \quad 1 \le i \le n-1,\\
    &y_n = 0,
\end{aligned}
\end{equation}
where $A_i$ are some non-negative numbers. Note that \eqref{eqn: system for F} is a particular case of \eqref{eqn: system for y} with $A_i =p_i \Phi_i^+ (1 - F_{i+1}^n) + q_i \Phi_i^- (1 - F_{i-1}^n )$. The above system has a simple probabilistic interpretation: we travel in the state space $\{0,1, \dots, n\}$ with transition probabilities $p_i$, $q_i$ and get a profit of $A_i$ at each state $i$ until we hit $n$. Then $y_i$ is the total expected profit when starting form $i$ upon reaching $n$ (recall that $p_{01}=1$). From this interpretation, a universal lemma for handling such systems follows. Recall that $(X_k)$ is the embedded Markov chain of the process $X$ and denote $\varkappa_n = \inf\{k \ge 0\mid X_k = n\}$. 
\begin{lemma}
\label{lemma solution profit}
    System \eqref{eqn: system for y} has a unique solution given by the formula
    \begin{equation*}
        y_i = \mathsf{E}_i \sum_{k=0}^{\varkappa_n - 1} A_{X_k}, \quad 0 \le i \le n. 
    \end{equation*}
\end{lemma}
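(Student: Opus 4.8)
The plan is to prove both existence and uniqueness by exploiting the probabilistic interpretation of the system, and the fact that the embedded Markov chain absorbs at $n$. First I would verify that the proposed formula $y_i = \mathsf{E}_i \sum_{k=0}^{\varkappa_n - 1} A_{X_k}$ actually solves \eqref{eqn: system for y}. This is a standard first-step analysis: I condition on the first jump of the chain $(X_k)$. Starting from a state $i$ with $1 \le i \le n-1$, the chain collects the profit $A_i$ immediately, then moves to $i+1$ with probability $p_i$ or to $i-1$ with probability $q_i$, and the strong Markov property gives $\mathsf{E}_i \sum_{k=0}^{\varkappa_n-1} A_{X_k} = A_i + p_i\, \mathsf{E}_{i+1}\sum_{k=0}^{\varkappa_n-1} A_{X_k} + q_i\, \mathsf{E}_{i-1}\sum_{k=0}^{\varkappa_n-1} A_{X_k}$, which is exactly the middle equation. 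The boundary case $i=0$ uses $p_{0,1}=1$ to produce $y_0 = A_0 + y_1$, and the case $i=n$ gives $\varkappa_n = 0$ so that the sum is empty and $y_n = 0$. I should be a little careful to confirm that $\varkappa_n < \infty$ almost surely on the finite path space up to absorption, so that the expectation and the conditioning are legitimate; since we only care about the chain until it reaches $n$ and the profits $A_i$ are finitely many non-negative numbers, the sum is a well-defined (possibly via monotone convergence) non-negative quantity.

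Next I would establish uniqueness. The cleanest route is to observe that \eqref{eqn: system for y} is a linear system in the unknowns $(y_0, \dots, y_{n-1})$ (with $y_n=0$ fixed), so uniqueness is equivalent to showing that the only solution of the associated homogeneous system (all $A_i = 0$, $y_n = 0$) is the zero vector. For the homogeneous system, the middle equation $y_i = p_i y_{i+1} + q_i y_{i-1}$ together with $y_0 = y_1$ says precisely that $(y_i)$ is harmonic for the chain on $\{0,\dots,n\}$ with the given reflecting-type condition at $0$ and Dirichlet condition $y_n = 0$. A harmonic function on a finite set with these boundary conditions must vanish: either I invoke the maximum principle (a harmonic function attains its extrema on the boundary, and the only free boundary value is $y_n = 0$, while the condition $y_0 = y_1$ prevents a strict interior maximum from sitting at $0$), or I note that in terms of the canonical scale $u_i$ from \eqref{eqn: scale} the homogeneous solution is forced to be an affine function of $u_i$ that vanishes at $n$ and has zero increment at the reflecting end, hence is identically zero.

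Alternatively, and perhaps more transparently, uniqueness follows directly from the probabilistic formula itself: any solution can be shown to equal $\mathsf{E}_i \sum_{k=0}^{\varkappa_n-1} A_{X_k}$ by iterating the recurrence. Writing $M_m = y_{X_{m \wedge \varkappa_n}} + \sum_{k=0}^{(m\wedge\varkappa_n) - 1} A_{X_k}$, the recurrence in \eqref{eqn: system for y} is exactly the statement that $(M_m)$ is a martingale (up to the absorption time), so the optional stopping theorem at $\varkappa_n$ yields $y_i = \mathsf{E}_i M_0 = \mathsf{E}_i M_{\varkappa_n} = \mathsf{E}_i\left(y_n + \sum_{k=0}^{\varkappa_n - 1} A_{X_k}\right) = \mathsf{E}_i \sum_{k=0}^{\varkappa_n-1} A_{X_k}$, using $y_n = 0$. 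This simultaneously gives existence and uniqueness in one stroke, which is why I would favor it.

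I expect the main obstacle to be the justification of optional stopping (or of the interchange of limit and expectation in the direct iteration), since $\varkappa_n$ is an unbounded stopping time on an infinite state-space chain that is not assumed recurrent. The resolution is that we are working on the finite interval $\{0,\dots,n\}$ with absorption at $n$: the chain restricted to this set reaches $n$ in finite expected time (this is a finite-state absorbing chain once we also note the behavior near $0$), so $\varkappa_n$ is almost surely finite, the profits accumulate only finitely many terms in expectation, and the martingale $(M_m)$ is uniformly integrable because the $A_i$ and $y_i$ range over a finite set of values. With these integrability points dispatched, both halves of the claim follow cleanly.
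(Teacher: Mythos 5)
Your proposal is correct. Note, however, that the paper offers no proof of this lemma at all: it is stated as a ``universal lemma'' that ``follows from the probabilistic interpretation,'' in the spirit of the classical treatment in Dynkin--Yushkevich. So there is nothing in the paper to compare against line by line; your write-up supplies exactly the details the authors took for granted. Your first route (first-step analysis for existence, then uniqueness of the homogeneous system via the canonical scale $u_i$ from \eqref{eqn: scale} --- the increment relation $p_i(y_{i+1}-y_i)=q_i(y_i-y_{i-1})$ plus $y_1-y_0=0$ forces all increments to vanish, and $y_n=0$ kills the constant) is the natural intended argument. Your martingale route is a clean alternative that gives existence and uniqueness simultaneously. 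Two small points deserve explicit mention. First, the reduction to a finite state space is legitimate only because the walk is nearest-neighbor: starting from $i\le n$, the chain cannot jump over $n$, so before $\varkappa_n$ it lives in $\{0,\dots,n\}$; this is where the birth-and-death structure enters. Second, your phrase that $(M_m)$ is uniformly integrable ``because the $A_i$ and $y_i$ range over a finite set of values'' is, as stated, not sufficient --- boundedness of the increments alone does not give uniform integrability. What does suffice is the domination $|M_m|\le \max_j |y_j| + (\max_j A_j)\,\varkappa_n$ together with $\mathsf{E}_i\,\varkappa_n<\infty$ (which holds since from any state the chain reaches $n$ within $n$ steps with probability at least $(\min_j p_j)^n$, giving $\varkappa_n$ a geometric tail); you do assert the finite expected hitting time in the same sentence, so the argument closes, but the logical dependence should be made explicit.
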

We get the following comparison lemma as a corollary.
\begin{lemma}
\label{corollary inequality}
    Suppose $\tilde{y}_i$ satisfy the system \eqref{eqn: system for y} with $\widetilde{A}_i$ instead of $A_i$ and $\widetilde{A}_i \le A_i$ for all $0 \le i \le n$. Then $\tilde{y}_i \le y_i$ for all $0 \le i \le n$.
\end{lemma}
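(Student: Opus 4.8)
The plan is to deduce the comparison directly from the explicit probabilistic representation furnished by Lemma \ref{lemma solution profit}, rather than manipulating the recursion by hand. Both $y_i$ and $\tilde y_i$ solve a system of the form \eqref{eqn: system for y}, the only difference being the profit sequences $A_i$ and $\widetilde A_i$; crucially, the embedded Markov chain $(X_k)$, its transition probabilities $p_i, q_i$, and hence the hitting time $\varkappa_n = \inf\{k \ge 0 \mid X_k = n\}$ are the same in both systems. Applying Lemma \ref{lemma solution profit} to each, I would write
\[
y_i = \mathsf{E}_i \sum_{k=0}^{\varkappa_n - 1} A_{X_k}, \qquad \tilde y_i = \mathsf{E}_i \sum_{k=0}^{\varkappa_n - 1} \widetilde A_{X_k}, \qquad 0 \le i \le n.
\]

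The key step is a pathwise comparison. For every realization of the chain and every index $k$ one has $\widetilde A_{X_k} \le A_{X_k}$ by the hypothesis $\widetilde A_j \le A_j$ for all $j$. Summing the same (random) number $\varkappa_n$ of non-negative terms therefore yields
\[
\sum_{k=0}^{\varkappa_n - 1} \widetilde A_{X_k} \le \sum_{k=0}^{\varkappa_n - 1} A_{X_k}
\]
almost surely. Taking $\mathsf{E}_i$ of both sides and using monotonicity of expectation gives $\tilde y_i \le y_i$ for every $0 \le i \le n$, which is the assertion.

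Essentially no obstacle arises, since this is a direct corollary; the single point deserving a word of care is that the sums above may a priori run to $\varkappa_n = \infty$. This causes no difficulty because the profits are non-negative, so each sum is a well-defined element of $[0, \infty]$ and the inequality persists (equivalently, one may first note that $\varkappa_n < \infty$ almost surely for the nearest-neighbour chain started at $i \le n$, as reaching any state beyond $\{0,\dots,n-1\}$ forces a visit to $n$). An alternative route, avoiding the representation entirely, is to set $d_i = y_i - \tilde y_i$ and observe by linearity that $d_i$ solves \eqref{eqn: system for y} with the non-negative profits $A_i - \widetilde A_i$; Lemma \ref{lemma solution profit} then gives $d_i = \mathsf{E}_i \sum_{k=0}^{\varkappa_n-1}(A_{X_k} - \widetilde A_{X_k}) \ge 0$ at once.
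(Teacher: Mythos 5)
Your proof is correct and follows exactly the route the paper intends: the paper derives this comparison lemma as an immediate corollary of the representation $y_i = \mathsf{E}_i \sum_{k=0}^{\varkappa_n - 1} A_{X_k}$ from Lemma~\ref{lemma solution profit}, which is precisely your termwise-comparison argument (your remarks on $\varkappa_n < \infty$ and the linearity variant are correct but not needed beyond that).
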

The following lemma from the theory of Markov processes provides a useful explicit formula for the solution of systems of equations we are interested in. 
\begin{lemma}[see \cite{D_Y}, IV, \S 5]
\label{lemma D-Y}
    The solution of system \eqref{eqn: system for y} with coefficients $A = \{A_i\}_{i=0}^n$ is given by $y_j  = S_n (A) - S_j(A)$, where
    \begin{equation*}
        S_j(A) =  \sum\limits_{k=0}^{j-1} \sum\limits_{i=0}^{k} A_i \frac{q_{i+1} \dots q_k}{p_i \dots p_k}, \quad 0 \le j \le n. 
    \end{equation*}
\end{lemma}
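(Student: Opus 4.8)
The plan is to verify directly that the proposed closed form $y_j = S_n(A) - S_j(A)$ satisfies the recurrence \eqref{eqn: system for y}, and then to invoke the uniqueness already established in Lemma \ref{lemma solution profit}: since \eqref{eqn: system for y} has a unique solution, exhibiting one that obeys the recurrence and the boundary conditions is enough. To streamline the computation, I would write $d_k = \sum_{i=0}^{k} A_i \frac{q_{i+1} \dots q_k}{p_i \dots p_k}$, so that $S_j(A) = \sum_{k=0}^{j-1} d_k$ and the candidate becomes a tail sum $y_j = \sum_{k=j}^{n-1} d_k$. Consequently the consecutive differences are simply $y_j - y_{j+1} = d_j$ for $0 \le j \le n-1$, which is the only input the subsequent checks require.

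The boundary relations are immediate. The endpoint value $y_n = \sum_{k=n}^{n-1} d_k$ is an empty sum, so $y_n = 0$. For the top equation $y_0 = A_0 + y_1$, it suffices to see $d_0 = A_0$: its only term is $i = k = 0$, whose factor $\frac{q_1 \dots q_0}{p_0 \dots p_0}$ equals $1/p_0 = 1$ under the empty-product convention together with $p_0 = p_{0,1} = 1$.

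For the interior equations $1 \le i \le n-1$, I would use $p_i + q_i = 1$ to rewrite $y_i = A_i + p_i y_{i+1} + q_i y_{i-1}$ in the equivalent difference form $p_i(y_i - y_{i+1}) - q_i(y_{i-1} - y_i) = A_i$, that is, $p_i d_i - q_i d_{i-1} = A_i$. This identity is the heart of the matter. Expanding gives $p_i d_i = \sum_{l=0}^{i} A_l \frac{q_{l+1} \dots q_i}{p_l \dots p_{i-1}}$, in which the $l = i$ term collapses via empty products to exactly $A_i$, while $q_i d_{i-1} = \sum_{l=0}^{i-1} A_l \frac{q_{l+1} \dots q_i}{p_l \dots p_{i-1}}$ coincides term by term with the remaining $l \le i-1$ part of $p_i d_i$. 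Subtracting, everything cancels save the isolated $A_i$, establishing the required identity.

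The only delicate point, and the one I would be most careful about, is the bookkeeping of empty products, especially the base term at $i = k = 0$ and the collapsing $l = i$ term, where one must consistently read $q_{i+1} \dots q_i = 1$, $p_i \dots p_{i-1} = 1$, and $p_0 = 1$; every other step is routine algebra. Once the interior recurrence and both boundary relations are confirmed, uniqueness from Lemma \ref{lemma solution profit} identifies the verified tail sum with the actual solution, completing the proof.
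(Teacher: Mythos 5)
Your proof is correct. Note that the paper itself gives no argument for this lemma --- it is quoted directly from Dynkin and Yushkevich \cite{D_Y} --- so there is no internal proof to compare against; what you have written is the natural way to make the paper self-contained on this point. Your verification (tail sums $y_j=\sum_{k=j}^{n-1} d_k$, the two boundary checks, and the interior identity $p_i d_i - q_i d_{i-1}=A_i$) is the guess-and-check mirror image of the constructive derivation in the cited source: there one solves the same first-order recursion for the differences forward by induction, starting from $d_0=A_0$ and using $d_i=(A_i+q_i d_{i-1})/p_i$, which produces the closed form $d_k=\sum_{i=0}^{k}A_i\frac{q_{i+1}\dots q_k}{p_i\dots p_k}$, and then sums against the terminal condition $y_n=0$. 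The algebra is identical, just run in opposite directions; what your route buys is economy, since Lemma \ref{lemma solution profit} already supplies existence and uniqueness for system \eqref{eqn: system for y}, so exhibiting one solution suffices. Your care with the conventions is also well placed: the base identity $d_0=A_0$ genuinely requires both the empty-product convention and $p_0=p_{0,1}=1$, and the collapse of the $l=i$ term requires reading $q_{i+1}\dots q_i=1$ and $p_i\dots p_{i-1}=1$, exactly as you flag.
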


\subsection{Proof of the explosion condition (Theorem \ref{explosion theorem})}\label{regularity proof}
We first assume the right-hand side of \eqref{eqn: explosion condition} holds, that is,
\begin{equation*}
    \sum_{k = 0}^\infty  \left(\sum_{i=0}^{k} \nu_i \right) \delta_k = \sum_{k = 0}^\infty  \sum_{i=0}^{k} (1-\E \ee^{-\tau_i}) \frac{q_{i+1} \dots q_k}{p_i \dots p_k} < \infty.
\end{equation*}
From this we prove that $\mathsf P \{X \text{ explodes to }\infty \} = 1$. Put by definition
\begin{equation*}
    \Phi_i \coloneqq 1-\E \ee^{-\tau_i} = 1 - p_i\E \ee^{-\tau_i^+}-q_i\E \ee^{-\tau_i^-} = p_i \Phi_i^+ + q_i \Phi_i^-.
\end{equation*}
Then, following Lemma \ref{lemma D-Y}, we can denote the above series by $S_\infty (\Phi) = \lim_{n \to \infty} S_n (\Phi)$, and hence our assumption becomes $S_\infty (\Phi) < \infty$. 

Recall that $F_i^n = 1 - f_i^n = 1 - \E_i \ee^{-\sigma_n}$ satisfy system \eqref{eqn: system for F}. We want to compare $F_i^n$ with a solution to a simpler system. Note that, for all indices,
\begin{equation}\label{eqn: estimate with Phi i}
p_i \Phi_i^+ (1 - F_{i+1}^n) + q_i \Phi_i^- (1 - F_{i-1}^n ) \le p_i \Phi_i^+ + q_i \Phi_i^- = \Phi_i.
\end{equation}
Let $G_i^n$, $0 \le i \le n$, satisfy the system
\begin{equation}\label{eqn: system for G}
\begin{aligned}
    &G_0^n = \Phi_0 + G_1^n,\\
    &G_i^n = \Phi_i  + p_i G_{i+1}^n + q_i G_{i-1}^n, \quad  1 \le i \le n-1,\\
    &G_n^n = 0.
\end{aligned}
\end{equation}
By Lemma \ref{lemma solution profit}, $G_i^n$ are determined uniquely. Moreover, $F_i^n \le G_i^n$ for $0 \le i \le n$ due to estimate~\eqref{eqn: estimate with Phi i} and comparison Lemma \ref{corollary inequality}. Combining this with Lemma \ref{lemma D-Y}, we get
\begin{equation*}
    F_i^n \le G_i^n = S_n (\Phi) - S_i (\Phi), \quad 0 \le i \le n.
\end{equation*}
By Lemma \ref{lemma convergence of f}, $\lim_{n \to \infty} F_i^n =  \lim_{n \to \infty} (1-f_i^n) = 1 - f_i$, hence we can pass to the limit
\begin{equation*}
1 - f_i = \lim_{n \to \infty} F_i^n \le S_\infty (\Phi) - S_i (\Phi).
\end{equation*}
Since $S_\infty (\Phi) < \infty$ and $1 - f_i \ge 0$, it follows that 
\begin{equation*}
    0 \le \lim_{i \to \infty} (1-f_i) \le \lim_{i \to \infty} (S_\infty (\Phi) - S_i (\Phi)) = 0.
\end{equation*}
Hence $\lim_{i \to \infty} f_i = 1$, and Lemma \ref{lemma Lap. transf. mom. of exp. link} implies that $\mathsf P \{X \text{ explodes to }\infty\} = 1$.

Let us show that $\mathsf P \{X \text{ explodes to }\infty  \} = 1$ implies $S_\infty (\Phi) < \infty$. The idea is to estimate $F_i^n = 1 - \mathsf E_i \mathrm{e}^{-\sigma_n}$ from below, using the fact that it satisfies the system \eqref{eqn: system for F}. Lemma \ref{lemma convergence of f} implies that, for all $0 \le i \le n$,
\begin{equation*}
   1- F_i^n = f_i^n = \mathsf E_i \mathrm{e}^{-\sigma_n} \ge \mathsf E_i \mathrm{e}^{-\sigma_\infty} \ge \mathsf E_0 \mathrm{e}^{-\sigma_\infty} = f_0 > 0.
\end{equation*}
Denoting $c = f_0$, we have 
\begin{equation}\label{eqn: Phi estimate from below}
     p_i \Phi_i^+ (1 - F_{i+1}^n) + q_i \Phi_i^- (1 - F_{i-1}^n ) \ge p_i \Phi_i^+ c + q_i \Phi_i^- c = \Phi_i c.
\end{equation}
Now consider a system
\begin{equation}\label{eqn: system for H}
\begin{aligned}
    &H_0^n = \Phi_0 c + H_1^n,&\\
    &H_i^n = \Phi_i c + p_i H_{i+1}^n + q_i H_{i-1}^n, \quad & 1 \le i \le n-1,\\
    &H_n^n = 0.&
\end{aligned}
\end{equation}
Then from \eqref{eqn: Phi estimate from below} and Lemmas \ref{lemma solution profit}, \ref{corollary inequality}, it follows that $H_i^n \le F_i^n$ for all $0 \le i \le n$. On the other hand, from Lemma \ref{lemma D-Y}, we get
\begin{equation*}
H_i^n = c G_i^n = c (S_n (\Phi) - S_i (\Phi)).
\end{equation*}
This leads us to the estimate
\begin{equation*}
    S_n (\Phi) - S_i (\Phi) = \frac{1}{c} H_i^n \le \frac{1}{c} F_i^n = \frac{1}{c} (1 - \mathsf E_i \mathrm{e}^{-\sigma_n}) \le \frac{1}{c}
\end{equation*}
for all $0 \le i \le n$, which implies the convergence of $S_n (\Phi)$ to a finite limit. 

The statement \eqref{eqn: no explosion condition} for non-explosion follows from \eqref{eqn: explosion condition} and Lemma \ref{lemma Lap. transf. mom. of exp. link}. This finishes the proof of Theorem \ref{explosion theorem}.

\subsection{Proof of the implosion condition (Theorem \ref{implosion theorem})}\label{implosion proof}
Assume the right-hand side of \eqref{eqn: implosion condition} holds, that is,
\begin{equation*}
    \sum_{k = 0}^\infty \left(\sum_{i=k+1}^\infty \nu_i \right)\delta_k  < \infty \quad\text{and} \quad \sum_{k=0}^\infty \delta_k = \infty.
\end{equation*}
From this we prove that $\Pb \{X \text{ implodes from }\infty\} = 1$, i.e., $X$ comes down from $\infty$ to any state $l$ in a finite time. We first examine the hitting time $\sigma_l =\inf\{t \ge 0 \mid X(t) = l \}$ when starting from some finite state $i \ge l$. Add a point $r$ on the right of $i$, and consider the minimum of hitting times $\sigma_l$ and $\sigma_r$, denoted by $\sigma_l \wedge \sigma_r$. We define 
\begin{equation*}
    F_i^{l, r} = 1 - \E_i \ee^{- \sigma_l \wedge \sigma_r}, \quad l \le i \le r,
\end{equation*}
and, analogously to derivation of \eqref{eqn: system for f} and \eqref{eqn: system for F}, arrive at the system
\begin{equation}\label{eqn: system for F^lr}
\begin{aligned}
    &F_l^{l, r} = F_r^{l, r} = 0,\\
    &F_i^{l, r} = p_i \Phi_i^+ (1 -  F_{i+1}^{l, r})  + q_i \Phi_i^-  (1 - F_{i-1}^{l, r} ) + p_i F_{i+1}^{l, r} + q_i F_{i-1}^{l, r}, \quad  l < i < r.
\end{aligned}
\end{equation}
We want to compare the solution of \eqref{eqn: system for F^lr} with a solution of a simpler system
\begin{equation}\label{eqn: system for G^lr}
\begin{aligned}
    &G_l^{l, r} = G_r^{l, r} = 0,\\
    &G_i^{l, r} = \Phi_i + p_i G_{i+1}^{l, r} + q_i G_{i-1}^{l, r}, \quad  l < i < r.
\end{aligned}
\end{equation}
It has the same probabilistic interpretation as \eqref{eqn: system for y}, but now $G_i^{l, r}$ is the total expected profit in interval $(l, r)$ when starting from $i$ up to hitting $l$ or $r$. Recall that $(X_k)$ is the embedded Markov chain of the process $X$, and $\varkappa_n = \inf\{k \ge 0\mid X_k = n\}$. We arrive at the formula
\begin{equation*}
    G_i^{l,r} = \E_i \sum_{k = 0}^{\varkappa_l \wedge \varkappa_r - 1} \Phi_{X_k}, \quad l \le i \le r.
\end{equation*}
Using a suitable version of comparison Lemma \ref{corollary inequality}, we get that $F_i^{l,r} \le G_i^{l,r}$ for all $l \le i \le r$. 

Since $\sum_{k=0}^\infty \delta_k = \infty$, the Markov chain $(X_k)$ is recurrent due to Section \ref{transience, recc and scale}. Recall that in the recurrent case $\sigma_\infty = \infty$ almost surely,\,implying $\lim_{r \to \infty} \sigma_l \wedge \sigma_r = \sigma_l$ almost surely. Hence by Theorem of monotone convergence there exist limits
\begin{equation*}
    F_i^l \coloneqq \lim_{r \to \infty} F_i^{l,r} = 1 - \E_i \ee^{- \sigma_l }, \quad
    G_i^l \coloneqq  \lim_{r \to \infty}  G_i^{l,r} = \E_i \sum_{k = 0}^{\varkappa_{l} - 1} \Phi_{X_k}
\end{equation*}
with $F_i^l \le G_i^l$ for all $i \ge l$. Let us now rewrite the total profit as a sum over all states
\begin{equation}\label{eqn: first formula for G i l}
    G_i^l 
    =  \E_i \sum_{k = 0}^{\varkappa_{l} - 1} \Phi_{X_k}
    =\sum_{j = l+1}^\infty \E_i \sum_{k = 0}^{\varkappa_{l}-1} \Phi_j \1_{\{X_k = j\}} 
    \eqqcolon \sum_{j = l+1}^\infty y_i (j),
\end{equation}
where the value $y_i (j)$ resembles the average profit at the state $j > l$ when starting from $i \ge l$ up to hitting $l$. Let us reduce it to the case of $j=i$. If the states $i$ and $j$ are different, the process gets to $j$ earlier than to $l$ with probability $\Pb_i \{\sigma_j < \sigma_l\}$, and starts afresh from $j$. Hence we have
\begin{equation*}
    y_i(j) = y_j (j) \Pb_i \{\sigma_j < \sigma_l\}.
\end{equation*}
Let us denote $y_j = y_j(j )$ and examine it more closely. Starting from a state $j$, we first get a profit of $\Phi_j$. Then we either go right with probability $p_j$ and return to $j$ with probability one due to recurrence, or go left with probability $q_j$ and return to $j$ with probability $\Pb_{j-1} \{\sigma_j < \sigma_l\}$. This leads to the relation
\begin{equation*}
    y_j = \Phi_j + p_j y_j + q_j y_j \Pb_{j-1} \{\sigma_j < \sigma_l\},
\end{equation*}
from which we get the explicit formula
\begin{equation}\label{eqn: first formula for y}
    y_j = \frac{\Phi_j}{1 - p_j - q_j \Pb_{j-1} \{\sigma_j < \sigma_l\}} =\frac{\Phi_j}{q_j \Pb_{j-1} \{\sigma_j > \sigma_l\}}.
\end{equation}
The latter probability can be calculated as follows (see Section \ref{transience, recc and scale}):
\begin{equation*}
    \Pb_{j-1} \{\sigma_j > \sigma_l\} = \frac{u_j - u_{j-1}}{u_j - u_l} .
\end{equation*}
Recall that $\nu_j = \Phi_j \frac{p_1 \dots p_{j-1}}{q_1 \dots q_j}$. Hence \eqref{eqn: first formula for y} becomes
\begin{equation*}
    y_j 
    = \frac{\Phi_j (u_j - u_l)}{q_j(u_j - u_{j-1})} 
    = \frac{\Phi_j (u_j - u_l)}{q_j\delta_{j-1}} 
    = \nu_j (u_j - u_l) 
    = \nu_j \sum_{k=l}^{j-1} \delta_k.
\end{equation*}
Let us return to the total profit $G_i^l$ in the interval $(l, \infty)$ up to hitting $l$. We can expand \eqref{eqn: first formula for G i l}:
\begin{align*}
    G_i^l 
    = \sum_{j = l+1}^\infty y_j \Pb_i\{\sigma_j < \sigma_l\} \le \sum_{j = l+1}^{\infty} y_j  
    = \sum_{j = l+1}^{\infty} \nu_j \sum_{k = l}^{j-1} \delta_k 
    = \sum_{k = l}^{\infty} \left(  \sum_{j = k+1}^{\infty} \nu_j \right)\delta_k < \infty.
\end{align*}
Moreover, since $\Pb_i\{\sigma_j < \sigma_l\}$ is increasing in $i$, the monotone convergence theorem implies 
\begin{equation}\label{eqn: final formula for G_l}
   G^l 
   \coloneqq \lim_{i \to \infty} G_i^l 
   = \lim_{i \to \infty} \sum_{j = l+1}^\infty y_j \Pb_i\{\sigma_j < \sigma_l\} 
   = \sum_{j = l+1}^{\infty} y_j  
   = \sum_{k = l}^{\infty} \left(  \sum_{j = k+1}^{\infty} \nu_j \right)\delta_k.
\end{equation}
Recall the alternative construction of $X$ (Section \ref{alt construction}), and the alternative definition of implosion (Definition \ref{alt def of implosion}), in which $\rho_\infty = \sum_{k=1}^\infty \theta_k$ represents the first time of hitting $0$ when starting at $\infty$. Observe that $F_i^l = 1 - \E_i \ee^{- \sigma_l } =1 - \E \ee^{-\sum_{k = l+1}^i \theta_k}$. Then there exists a limit $F^l = \lim_{i\to \infty} F_i^l$, and since $F_i^l \le G_i^l$ for all $i \ge l$, we have
\begin{equation}\label{eqn: F_l le G_l}
    1-\E \ee^{-\sum_{k = l+1}^\infty \theta_k} 
    = F^l \le G^l 
    = \sum_{k = l}^{\infty} \left( \sum_{j = k+1}^{\infty} \nu_j \right)\delta_k.
\end{equation}
The right hand side of  \eqref{eqn: F_l le G_l} goes to $0$ as $l \to \infty$, because the series is convergent. This implies that $\E \ee^{-\sum_{k = l+1}^\infty \theta_k} > 0$ for large enough $l$, and hence for all $l$. Thus, $\rho_\infty = \sum_{k = 1}^\infty \theta_k < \infty$ with probability one due to Kolmogorov's zero-one law. This finishes the first part of the proof. 

Suppose $\rho_\infty < \infty$ almost surely. Then $\sum_{k=0}^\infty \delta_k = \infty$ due to Section \ref{transience, recc and scale}. Let us show that
\begin{equation*}
    \sum_{k = 0}^\infty \left(\sum_{i=k+1}^\infty \nu_i \right)\delta_k  < \infty.
\end{equation*}
From the assumption, it is clear that for all $l \ge 0$
\begin{equation*}
    f^l \coloneqq \E \ee^{-\sum_{k = l+1}^\infty \theta_k} \ge \E \ee^{-\sum_{k = 1}^\infty \theta_k} = f^0 > 0.
\end{equation*}Analogously to explosion condition proof, using \eqref{eqn: final formula for G_l}, we obtain the estimate
\begin{equation*}
    \sum_{k = l}^\infty \left(\sum_{i=k+1}^\infty \nu_i \right)\delta_k  = G^l \le \frac{1}{f^0} F^l \le \frac{1}{f^0},
\end{equation*}
which implies the convergence of the series and finishes the proof of Theorem \ref{implosion theorem}.

\section*{Acknowledgements}
\phantom{ }
\vspace{-1em}

A. Pilipenko  thanks  the Swiss National Science
Foundation for partial support  of the paper (grants No. IZRIZ0\_226875, No. 200020\-\_200400, No. 200020\_192129) and also  the Isaac Newton Institute for Mathematical Sciences, Cambridge, for support and hospitality during the programme \emph{Stochastic systems for anomalous diffusion}. This work was supported by EPSRC grant EP/Z000580/1. 

This work was supported by a grant from the Simons Foundation (SFI-PD-Ukraine-00014586, V.T.)

\bibliographystyle{plain}

\begin{thebibliography}{10}

\bibitem{bingham1989regular}
N.~H. Bingham, C.~M. Goldie, and J.~L. Teugels.
\newblock {\em Regular variation}, volume~27.
\newblock Cambridge university press, 1989.

\bibitem{chung2012markov}
K.L. Chung.
\newblock {\em Markov Chains: With Stationary Transition Probabilities}.
\newblock Grundlehren der mathematischen Wissenschaften. Springer Berlin Heidelberg, 2012.

\bibitem{D_Y}
E.~B. Dynkin and A.~A. Yushkevich.
\newblock {\em Markov processes: {T}heorems and problems}.
\newblock Plenum Press, New York, 1969.
\newblock Translated from the Russian by James S. Wood.

\bibitem{Feller52ParabDiffEq}
W.~Feller.
\newblock The parabolic differential equations and the associated semi-groups of transformations.
\newblock {\em Annals of Mathematics}, 55(3):468--519, 1952.

\bibitem{Feller1957Generalized}
W.~Feller.
\newblock Generalized second order differential operators and their lateral conditions.
\newblock {\em Illinois journal of mathematics}, 1(4):459--504, 1957.

\bibitem{Feller1957Boundaries}
W.~Feller.
\newblock On boundaries and lateral conditions for the {K}olmogorov differential equations.
\newblock {\em Ann. of Math. (2)}, 65:527--570, 1957.

\bibitem{Feller1959BirthDeath}
W.~Feller.
\newblock The birth and death processes as diffusion processes.
\newblock {\em Journal de Math{\'e}matiques Pures et Appliqu{\'e}es}, 38:301, 1959.

\bibitem{Feller1964SemiMarkov}
W.~Feller.
\newblock On semi-markov processes.
\newblock {\em Proceedings of the National Academy of Sciences}, 51(4):653--659, 1964.

\bibitem{Feller_Introduction}
W.~Feller.
\newblock {\em An introduction to probability theory and its applications. {V}ol. {II}}.
\newblock John Wiley \& Sons, Inc., New York-London-Sydney, second edition, 1971.

\bibitem{Foucart2019Branch_proc_with_comp_dual_and_refl_at_infty}
C.~Foucart.
\newblock {Continuous-state branching processes with competition: duality and reflection at infinity}.
\newblock {\em Electronic Journal of Probability}, 24:1--38, 2019.

\bibitem{gerrard1983regularity}
R.~Gerrard.
\newblock Regularity conditions for semi-markov and markov chains in continuous time.
\newblock {\em Journal of applied probability}, 20(3):505--512, 1983.

\bibitem{Ito_Stoch_Proc}
K.~It\^o.
\newblock {\em Essentials of stochastic processes}, volume 231 of {\em Translations of Mathematical Monographs}.
\newblock American Mathematical Society, Providence, RI, 2006.

\bibitem{ItoMcKean2012(1965)Diffusion}
K.~It{\^o} and H.P.J. McKean.
\newblock {\em Diffusion processes and their sample paths}.
\newblock Springer Berlin Heidelberg, 2012.

\bibitem{karlin1981second}
S.~Karlin and H.M. Taylor.
\newblock {\em A Second Course in Stochastic Processes}.
\newblock Elsevier Science, 1981.

\bibitem{levy1954Processus}
P.~L\'evy.
\newblock Processus semi-markoviens.
\newblock In {\em Proc. Int. Congress. Math. III, Amsterdam, 1954}, 1954.

\bibitem{MenshikovPopovWade2016RandomWalks}
M.~Menshikov, S.~Popov, and A.~Wade.
\newblock {\em Non-homogeneous Random Walks: Lyapunov Function Methods for Near-Critical Stochastic Systems}.
\newblock Cambridge Tracts in Math. Cambridge University Press, 2016.

\bibitem{Peskir2015Boundary}
G.~Peskir.
\newblock On boundary behaviour of one-dimensional diffusions: from brown to feller and beyond.
\newblock {\em William Feller, Selected Papers II}, pages 77--93, 2015.

\bibitem{Smith1955Regenerative}
W.~L. Smith.
\newblock Regenerative stochastic processes.
\newblock {\em Proceedings of the Royal Society of London. Series A. Mathematical and Physical Sciences}, 232(1188):6--31, 1955.

\end{thebibliography}

\end{document}